\newtheorem{thm}{Theorem}
\newtheorem{theorem}[thm]{Theorem}
\newtheorem{lemma}[thm]{Lemma}
\newtheorem{corollary}[thm]{Corollary}
\newtheorem{prop}[thm]{Proposition}
\theoremstyle{definition}
\date{ }
\title{\bf On $(k,g)$-Graphs without $(g+1)$-Cycles}
\author{
Leonard Chidiebere Eze \thanks{Department of Algebra and Geometry, Faculty of Mathematics, Physics and Informatics, Mlynsk\' a dolina, 842 48 Bratislava, Slovakia. 
}
\and Robert Jajcay
 \footnotemark[1]
\and Jorik Jooken
 \thanks{Department of Computer Science, KU Leuven Kulak, 8500 Kortrijk, Belgium.\\ E-mail addresses:
 \protect\href{mailto:leonard.eze@fmph.uniba.sk}{\protect\nolinkurl{leonard.eze@fmph.uniba.sk}},
\protect\href{mailto:robert.jajcay@fmph.uniba.sk}{\protect\nolinkurl{robert.jajcay@fmph.uniba.sk}} and \protect\href{mailto:jorik.jooken@kuleuven.be}{\protect\nolinkurl{jorik.jooken@kuleuven.be}}
}
}
\date{}
\begin{document}
	
	\maketitle

	\begin{abstract}
	 A $(k,g,\underline{g+1})$-graph is a $k$-regular graph of girth $g$ which does not contain cycles of length
     $g+1$. Such graphs are known to exist for all parameter pairs $k \geq 3, g \geq 3 $, and we focus on determining the orders $n(k,g,\underline{g+1})$ of the smallest $(k,g,\underline{g+1})$-graphs. This problem can be viewed
     as a special case of the previously studied \textit{Girth Pair Problem}, the problem of finding the order of a 
     smallest $k$-regular graph in which the length of a smallest even length cycle and the length of a smallest
     odd length cycle are prescribed. When considering the case of an odd girth $g$, this problem also yields results towards the \textit{Cage Problem}, the problem of finding the order of a smallest $k$-regular graph of girth $g$.  
     We establish the monotonicity of the function $n(k,g,\underline{g+1})$ with respect to increasing $g$, and
     present universal lower bounds for the values $n(k,g,\underline{g+1})$.
     We propose an algorithm for generating all $(k,g,\underline{g+1})$-graphs on $n$ vertices, use this algorithm to determine several of the smaller values $n(k,g,\underline{g+1})$, and discuss various approaches to 
     finding smallest $(k,g,\underline{g+1})$-graphs within several classes of highly symmetrical graphs.
			\vskip 3mm
		
				\noindent{\bf Keywords: $k$-regular graphs, girth, cycle structure, extremal graphs, Cage Problem,
                graph generation algorithm} \\
				\noindent \textit{2010 Mathematics Subject Classification: 05C35; 05C85} 
			\end{abstract}
   
	\section{Motivation and Background}\label{Intro}
	This study focuses on $k$-regular graphs of girth $g$ that prohibit $(g+1)$-cycles, with emphasis on the case when the girth $g$ is odd. 
 This problem can be viewed as a special case of the \textit{Girth Pair Problem} \cite{Babuena, Babuena2, Harary}. For given $ k \geq 3 $ and a pair of numbers $ 3 \leq g < h $ of
 different parity, the Girth Pair problem calls for finding a smallest $k$-regular graph of girth $g$ in which the length of a shortest cycle of parity opposite to $g$ is $h$.
 It is, however, important to note that unlike the Girth Pair Problem, we are searching for $k$-regular graphs of girth $g$ which contain no cycles of length
 $g+1$, but we do not insist that the length of a shortest cycle of parity opposite to $g$ is $g+3$ (or any other specific number greater than $g+3$). One of the most general results concerning the possible distributions of cycle
 lengths at the beginning of the cycle spectrum of a $k$-regular graph has been proven in~\cite{Pisanski}. That paper proved that for any increasing sequence $g_1, g_2, \dots, g_s$ and any $k \geq 3$, there exist $k$-regular graphs with cycles of length not exceeding $g_s$ existing exactly for the specified lengths $ g_1,g_2,\ldots,g_s $ and containing no cycles of other lengths not exceeding $g_s$. On the 
 other hand, for odd girths $g$, all the {\it extremal} graphs we have found and included in this article contain an even length cycle of length $g+3$. It appears likely that 
a smallest $k$-regular graph of odd girth $g$ that does not contain a $(g+1)$-cycle necessarily contains a $(g+3)$-cycle. This observation is in some sense in line with another result overwhelmingly supported by empirical evidence,
namely the fact that all known extremal graphs of odd girth $g$ also contain a $(g+1)$-cycle. 

 The initial motivation for our study stems from the well-known \textit{Cage Problem}. A $k$-regular graph of girth $g$ is referred to as a $(k,g)$-\textit{graph}. The Cage Problem involves finding a smallest $(k,g)$-graph for a given pair $(k,g)$, $ k \geq 3, g \geq 3$; called a $(k,g)$-\textit{cage}. The Cage Problem is challenging because there is no unique method for constructing cages. Most cages are found using a combination of methods such as excision, regular lifts of graphs, removal of vertices and
 edges from graphs obtained from generalized $n$-gons, various algebra based methods, and extensive computer searches \cite{ExooR1}. 
In particular, the \textit{canonical double cover construction} is a simple lift technique used in this area which starts from a base graph $\Gamma$ of order $n$, replaces
each vertex $u$ of $\Gamma$ by a pair of vertices $ u_1$ and $u_2$, and replaces each edge $uv$ by the pair of edges $u_1v_2$ and $ u_2v_1$.
Despite its simplicity, this construction has been instrumental in establishing a fundamental recursive upper bound. Namely, it is easy to see that the
canonical double cover $\Gamma'$ of a $k$-regular graph $\Gamma$ is bipartite, $k$-regular, and that odd length cycles in the base graph $\Gamma$ lift into even length cycles of twice the lengths in $\Gamma'$ (while the even length cycles lift into cycles of unchanged
length). $\Gamma'$ contains no odd cycles because it is bipartite.  Moreover, every even length cycle in $\Gamma'$ corresponds to a closed walk in $\Gamma$ that does not consecutively visit $u$, $v$ and then $u$ again for any $u,v \in V(\Gamma)$. Hence, the canonical double 
cover of a $(k,g)$-graph $\Gamma$ of \textit{odd} girth $g$ is a $(k,g')$-graph $\Gamma'$ of order twice as big as the order of $\Gamma$. The girth $g'$ is greater than $g$ and equal to the smaller of the length of the shortest even length cycle in $\Gamma$ and of $2g$. This means, in particular, that the canonical double cover of a $k$-regular graph of odd girth $g$ is a $k$-regular graph of girth $g' \geq g+1 $. 
While this observation has been repeatedly exploited \cite{Babuena}, in what follows, we focus on a more specific use of the canonical double cover.

Specifically, the canonical double cover of a $k$-regular graph $\Gamma$ of \textit{odd girth} $g$ \textit{that does not include a $(g+1)$-cycle} is a $k$-regular graph $\Gamma'$ of girth $g' \geq g+3 $ (with  $g' = g+3 $ if and only if $g=3$ or $\Gamma$ contains a $(g+3)$-cycle). This simple, more specialized, observation yields a recursive construction of a $(k,g')$-graph from a $k$-regular graph $\Gamma$ of odd girth $g$ that does not include a $(g+1)$-cycle with $ g' \geq g+3$. Obviously, the smaller the order of $\Gamma $, the smaller the order of the canonical double cover. Thus, in order to construct small $(k,g)$-graphs
in this way, it is natural to search for smallest $k$-regular graphs of odd prescribed girth $g$ that do not contain $(g+1)$-cycles. As mentioned already in the beginning, $k$-regular graphs of (any) prescribed girth $g$ that do not contain $(g+1)$-cycles exist for all pairs $ k,g \geq 3 $ \cite{Pisanski}. Hence, it makes sense to look for the smallest among all such graphs. We shall denote \textit{the order of a smallest $k$-regular graph of girth $g$ that does not contain $(g+1)$-cycles} by $n(k,g,\underline{g+1})$, and talk about $(k,g,\underline{g+1})$-\textit{cages} (note that we allow for both even and odd $g$). 
This is analogous with the $n(k,g)$ notation used
in the context of the Cage Problem and denoting \textit{the order of a smallest $k$-regular graph of girth $g$}. 
Clearly, 
\begin{equation}\label{first}
n(k,g) \leq n(k,g,\underline{g+1}) ,
\end{equation}
for all $k \geq 3,g \geq 3$. Recalling again the observation stated at the end of the first paragraph, we note that 
at this point, we are not aware of any pair of parameters $(k,g)$ for 
which $g$ is odd and the above equality is not strict. However, we have not been able to prove that strict inequality is
universal. Furthermore, based on (a) the above observation that the canonical double cover of a $k$-regular graph of odd girth $g$ without $(g+1)$-cycles yields a $k$-regular graph of girth at least $g+3$ and (b) the monotonicity of the the function $ n(k,g) $ with respect to the parameter $g$~\cite{ExooR1}, we obtain another fundamental inequality tying together the 
`classical' cage orders and the orders of cages considered in our paper. More precisely,
\begin{equation}\label{second}
n(k,g+3) \leq 2n(k,g,\underline{g+1}) ,
\end{equation}
for all $k \geq 3 $ and odd $g \geq 3$. 

Despite the simplicity of the presented ideas, canonical double covers of $(k,g,\underline{g+1})$-cages yield graphs whose orders are surprisingly close to the 
orders of corresponding $(k,g+3)$-cages. For example, using the orders of the $(k,g,\underline{g+1})$-cages determined later in this paper (in Section~\ref{sec:computationalResults}) yields the following comparisons (where the fourth column labeled $2 n(k,g,\underline{g+1}) $ is the order of the canonical double cover of the $ (k,g,\underline{g+1}) $-cage):

\begin{table}[h!]
\centering
\begin{tabular}{||c|c|c|c||}
\hline \hline
degree $k$ & girth $g$ & $n(k,g+3)$ & $2 n(k,g,\underline{g+1}) $ \\
\hline
$3$ & $3$ & $14$ & $20$ \\
$3$ & $5$ & $ 30$ & $36$ \\
$3$ & $7$ & $70$  & $72$ \\
$3$ & $9$ & $126$ & $152$ \\
$4$ & $3$ & $26$ & $30$ \\
$4$ & $5$ & $80$ & $90 $ \\
$5$ & $3$ & $42$ & $52$ \\
$6$ & $3$ & $62$ & $68$ \\
\hline \hline 
\end{tabular}
\caption{A comparison between the orders of $(k,g+3)$-cages and the orders of the canonical double covers of $(k,g,\underline{g+1})$-cages.}
\end{table}
A lower bound derived later in our paper yields that $n(3,11,\underline{12}) \geq 144$ (by applying Propositions~\ref{prop:lowerBound} and~\ref{prop:strictlowerBound}). That means that
if a graph of this order existed, and we used the corresponding $(3,11,\underline{12})$-cage 
in the canonical double cover construction, the order of the canonical double cover would be of order
$284$ which is quite a bit smaller than the order $384$ of the smallest known $(3,14)$-graph~\cite{E02}. 
Even though we conjecture that no $(3,11,\underline{12})$-graph of order $144$ exists, there is still some
leeway. Specifically, any canonical double cover of a $(3,11,\underline{12})$-graph of order greater than
$144$ and smaller than $ 384 / 2 = 192$ would result in a $(3,14)$-graph of order smaller than the currently
best known $(3,14)$-graph of order $384$. 
Further inspection of the results listed in \cite{ExooR1} reveals that the orders of the smallest 
known cubic graphs of even girths greater than $14$ differ considerably from the corresponding Moore 
bounds. This might suggest that finding a canonical double cover of a
relatively small cubic graph of appropriate girth $g$ not containing $(g+1)$-cycles might repeatedly 
lead to smaller cubic graphs of girth $g+3$ than the so-called record graphs listed in \cite{ExooR1} 
(a \textit{record $(k,g)$-graph} is a smallest \textit{known} $(k,g)$-graph). This might
also be the case for larger degrees $k$ for which the factor of $2$ coming from the double cover
construction is less significant when compared to the increase between the orders of $k$-regular graphs 
of girths $g$ and $g+3$.

\section{Basic Concepts and Estimates}
Our focus is on $(k,g)$-graphs not containing $(g+1)$-cycles, the existence of which was established for example in~\cite{Pisanski}. A natural lower bound on the order 
 $n(k,g)$ of $(k,g)$-graphs is the \textit{Moore bound}, denoted by $M(k,g)$, and defined as
	
	\[
	M(k,g) =  \left\{
	\begin{array}{lr}
		\dfrac {k(k-1)^{(g-1)/2}-2}{k-2}, & g \mbox{ odd,}\\
		\dfrac {2(k-1)^{g/2}-2}{k-2}, & g \mbox{ even. }
	\end{array} 
	\right.
	\]

 \noindent
 Erd\H{o}s and Sachs \cite{Erds} proved using the canonical double cover construction for any given integer 
 pair $k\geq 3$ and \textit{odd} $g \geq 3$ that $$n(k,g+1) \leq 2n(k,g).$$
	
 \noindent 
 Their bound has been further improved in \cite{Babuena}:
	
 \[
		n(k,g+1) \leq  \left\{
		\begin{array}{lr}
			2n(k,g) - 2\dfrac {k(k-1)^{(g-3)/4}-2}{k-2}, & g\equiv 3\mod4 , \\
			2n(k,g) - 4\dfrac {(k-1)^{(g-1)/4}-1}{k-2}, &  g\equiv 1\mod4, 
		\end{array} 
		\right.
 \]
	by using a canonical double cover of excised graphs. 
	
	We begin by improving the lower bounds on the orders of the members of the more specialized class
 of graphs considered in our paper.
 Combining the Moore bound with \eqref{second} we obtain for odd $ g \geq 3 $
	\[ M(k,g+3)/2 \leq n(k,g+3)/2 \leq n(k,g,\underline{g+1}). \]

Sauer~\cite{S67} proved that for every $k \geq 2$ and $g \geq 3$, we have $n(k,g)<n(k,g+1)$. Later, a short proof of this fact was also given by Fu, Huang and Rodger~\cite{FHR97}. We modify their idea to prove:
 \begin{theorem}
 \label{th:monotonicityLike}
 For every $k \geq 2$ and $g \geq 3$, we have $n(k,g,\underline{g+1}) \leq n(k,g+2,\underline{g+3})-2$.
 \end{theorem}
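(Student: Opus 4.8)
The plan is to adapt the vertex-deletion idea of Fu, Huang and Rodger into a surgery that lowers both the girth and the order by two in a single stroke. I would start from a $(k,g+2,\underline{g+3})$-cage $G$ on $N=n(k,g+2,\underline{g+3})$ vertices, fix a shortest cycle $C$ (of length $g+2$), and choose an edge $uv$ on $C$. Writing the remaining neighbours of $u$ as $U=\{u_1,\dots,u_{k-1}\}$ and of $v$ as $V'=\{v_1,\dots,v_{k-1}\}$, with $u_1$ and $v_1$ the neighbours of $u$ and of $v$ lying on $C$, the operation is to delete $u$ and $v$ and insert a perfect matching between $U$ and $V'$ that contains the edge $u_1v_1$. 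Since $G$ has girth $g+2\ge 5$, the vertices $u,v$ have no common neighbour (else a triangle) and no $u_i$ is adjacent to any $v_j$ (else a $4$-cycle $u_iuvv_j$); hence $U$ and $V'$ are disjoint, every matching edge is genuinely new, and the resulting graph $G'$ is a simple $k$-regular graph on $N-2$ vertices.

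The easy half is to produce a $g$-cycle: deleting $u$ and $v$ from $C$ leaves a path of length $g-1$ from $v_1$ to $u_1$, which the matching edge $u_1v_1$ closes into a cycle of length $g$, so $G'$ has girth at most $g$. Everything then reduces to showing that $G'$ has no cycle shorter than $g$ and, decisively, none of length exactly $g+1$. I would organise this by counting, for an arbitrary cycle $Z$ of $G'$, the number $t$ of matching edges it uses, and by expanding each matching edge $u_iv_j$ back into the length-$3$ detour $u_i\,u\,v\,v_j$ available in $G$.

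The crux is the case $t=1$. Here $Z$ consists of a single matching edge $u_iv_j$ together with a path $P$ from $u_i$ to $v_j$ in $G-\{u,v\}$, and the detour turns $P$ into a genuine cycle $u_i,u,v,v_j,P$ of $G$ of length $|P|+3$. Because $G$ has girth $g+2$ \emph{and contains no $(g+3)$-cycle}, this length is $g+2$ or at least $g+4$, i.e.\ $|P|\ge g-1$ and $|P|\ne g$; hence the length $|P|+1$ of $Z$ equals $g$ or is at least $g+2$, and is \emph{never} $g+1$. This is precisely the point at which the absence of $(g+3)$-cycles in $G$ is converted into the absence of $(g+1)$-cycles in $G'$, and it is what ties the two forbidden lengths together. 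For $t=0$ the cycle $Z$ lies in $G-\{u,v\}$ and has length at least $g+2$.

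For $t\ge 2$ I would bound the length of each of the $t$ arcs into which the matching edges cut $Z$: each arc joins two vertices of $U\cup V'$ and, together with a path of length two or three through $u$ and/or $v$, closes into an honest cycle of $G$ of length at least $g+2$; this forces each arc to have length at least $g-1$, so $Z$ has length at least $t(g-1)+t=tg\ge 2g\ge g+2$. Combining the three cases yields that $G'$ has girth exactly $g$ and contains no $(g+1)$-cycle, giving $n(k,g,\underline{g+1})\le N-2$. I expect the main obstacle to be the bookkeeping for $t\ge 2$: one must verify that the walks obtained after splitting at $u$ and $v$ really are simple cycles (so that the girth bound of $G$ applies), which rests on $U$ and $V'$ being disjoint and on distinct matching edges being vertex-disjoint. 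The single-matching-edge case, where the interplay between the two forbidden cycle lengths lives, is where the real idea resides.
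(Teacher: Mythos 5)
Your proposal is correct and is essentially the paper's own proof: both arguments delete two adjacent vertices $u,v$ lying on a $(g+2)$-cycle of a $(k,g+2,\underline{g+3})$-cage, re-join their remaining neighbours by a perfect matching containing the edge that closes the old girth cycle into a $g$-cycle, and then bound the length of every cycle in the new graph by expanding matching edges back into paths through $u$ and/or $v$. In particular, your decisive $t=1$ step (excluding $|P|=g$ because it would yield a $(g+3)$-cycle in the cage) is exactly the paper's Case~2, and your organization by the number $t$ of matching edges used is just a slightly more systematic arrangement of the paper's two-case analysis.
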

 \begin{proof}
 The result is clear for $k=2$, so henceforth assume that $k\geq3$. Let $\Gamma$ be a $(k,g+2,\underline{g+3})$-cage (recall that $\Gamma$ is guaranteed to exist due to~\cite{Pisanski}). Let $\mathcal{C}$ be a cycle of length $g+2$ in $\Gamma$ containing the edges $ux_1$, $uv$ and $vy_1$. Let $\{v,x_1,x_2,\ldots,x_{k-1}\}$ be the set of neighbors of $u$ and let $\{u,y_1,y_2,\ldots,y_{k-1}\}$ be the set of neighbors of $v$. Let $E'=\{x_1y_1, x_2y_2, \ldots, x_{k-1}y_{k-1}\}$. Finally, let $\Sigma=\Gamma-u-v+E'$ (see Fig.~\ref{fig:GammaAndSigma}). We claim that $\Sigma$ is a $k$-regular graph on $|V(\Gamma)|-2$ vertices of girth $g$ without cycles of length $g+1$.

        \begin{figure}[h!]
\begin{center}
\begin{tikzpicture}[scale=0.55]

        
        \foreach \x in {0}{
        \draw[fill] (\x*360/8+22.5:4.5) circle (2.4pt);
        \draw[line width=1.5pt] ({360/8 * (\x+1)+22.5}:4.5) -- ({360/8 * (\x )+22.5}:4.5);
        }
        \draw[fill] (1*360/8+22.5:4.5) circle (2.4pt);
        \foreach \x in {2,...,7}{
        \draw[fill] (\x*360/8+22.5:4.5) circle (2.4pt);
        \draw[line width=1.5pt] ({360/8 * (\x+1)+22.5}:4.5) -- ({360/8 * (\x )+22.5}:4.5);
        }

        \node at (0.2,4.0) {$\cdots$};

        \draw({360/8 * (1)+22.5}:4.5) -- (3,4.5);
        \draw({360/8 * (1)+22.5}:4.5) -- (2.8,4.7);
        \node at (2.2,4.7) {$\cdots$};
        \draw({360/8 * (1)+22.5}:4.5) -- (1.6,4.7);

        \draw({360/8 * (2)+22.5}:4.5) -- (-1,4.5);
        \draw({360/8 * (2)+22.5}:4.5) -- (-1.2,4.7);
        \node at (-1.8,4.7) {$\cdots$};
        \draw({360/8 * (2)+22.5}:4.5) -- (-2.4,4.7);

        \draw({360/8 * (3)+22.5}:4.5) -- (-5,1.1);
        \draw({360/8 * (3)+22.5}:4.5) -- (-5.2,1.9);
        \node at (-5.1,1.4) {$\cdots$};
        \draw({360/8 * (3)+22.5}:4.5) -- (-5.4,1.7);

        \draw({360/8 * (0)+22.5}:4.5) -- (5,1.1);
        \draw({360/8 * (0)+22.5}:4.5) -- (5.2,1.9);
        \node at (5.1,1.4) {$\cdots$};
        \draw({360/8 * (0)+22.5}:4.5) -- (5.4,1.7);

        \draw({360/8 * (4)+22.5}:4.5) -- (-5,-1.0);
        \draw({360/8 * (4)+22.5}:4.5) -- (-5.2,-1.9);
        \node at (-5.1,-1.4) {$\cdots$};
        \draw({360/8 * (4)+22.5}:4.5) -- (-5.4,-1.7);

        \draw({360/8 * (7)+22.5}:4.5) -- (5,-1.0);
        \draw({360/8 * (7)+22.5}:4.5) -- (5.2,-1.9);
        \node at (5.1,-1.4) {$\cdots$};
        \draw({360/8 * (7)+22.5}:4.5) -- (5.4,-1.7);

        \draw({360/8 * (5)+22.5}:4.5) -- (-4.2,-2.4);
        \draw[fill] (-4.2,-2.4) circle (2.4pt);
        \node at (-4.2,-3.1) {$\vdots$};

        \draw({360/8 * (5)+22.5}:4.5) -- (-4.2,-4.0);
        \draw[fill] (-4.2,-4.0) circle (2.4pt);

        \draw({360/8 * (6)+22.5}:4.5) -- (4.2,-2.4);
        \draw[fill] (4.2,-2.4) circle (2.4pt);
        \node at (4.2,-3.1) {$\vdots$};

        \draw({360/8 * (6)+22.5}:4.5) -- (4.2,-4.0);
        \draw[fill] (4.2,-4.0) circle (2.4pt);

        \node at (5.1, -4.0) {$y_{k-1}$};
        \node at (4.7, -2.4) {$y_{2}$};
        \node at (3.7, -1.3) {$y_{1}$};

        \node at (-5.1, -4.0) {$x_{k-1}$};
        \node at (-4.7, -2.4) {$x_{2}$};
        \node at (-3.7, -1.3) {$x_{1}$};

        \node at (-1.7, -4.6) {$u$};
        \node at (1.7, -4.6) {$v$};
        
\end{tikzpicture} \quad
\begin{tikzpicture}[scale=0.55]

        
        \foreach \x in {0}{
        \draw[fill] (\x*360/8+22.5:4.5) circle (2.4pt);
        \draw[line width=1.5pt] ({360/8 * (\x+1)+22.5}:4.5) -- ({360/8 * (\x )+22.5}:4.5);
        }
        \draw[fill] (1*360/8+22.5:4.5) circle (2.4pt);
        
        \foreach \x in {2,...,3}{
        \draw[fill] (\x*360/8+22.5:4.5) circle (2.4pt);
        \draw[line width=1.5pt] ({360/8 * (\x+1)+22.5}:4.5) -- ({360/8 * (\x )+22.5}:4.5);
        }
        \draw[fill] (4*360/8+22.5:4.5) circle (2.4pt);
        
        \foreach \x in {7}{
        \draw[fill] (\x*360/8+22.5:4.5) circle (2.4pt);
        \draw[line width=1.5pt] ({360/8 * (\x+1)+22.5}:4.5) -- ({360/8 * (\x )+22.5}:4.5);
        }
        
        \node at (0.2,4.0) {$\cdots$};

        \draw({360/8 * (1)+22.5}:4.5) -- (3,4.5);
        \draw({360/8 * (1)+22.5}:4.5) -- (2.8,4.7);
        \node at (2.2,4.7) {$\cdots$};
        \draw({360/8 * (1)+22.5}:4.5) -- (1.6,4.7);

        \draw({360/8 * (2)+22.5}:4.5) -- (-1,4.5);
        \draw({360/8 * (2)+22.5}:4.5) -- (-1.2,4.7);
        \node at (-1.8,4.7) {$\cdots$};
        \draw({360/8 * (2)+22.5}:4.5) -- (-2.4,4.7);

        \draw({360/8 * (3)+22.5}:4.5) -- (-5,1.1);
        \draw({360/8 * (3)+22.5}:4.5) -- (-5.2,1.9);
        \node at (-5.1,1.4) {$\cdots$};
        \draw({360/8 * (3)+22.5}:4.5) -- (-5.4,1.7);

        \draw({360/8 * (0)+22.5}:4.5) -- (5,1.1);
        \draw({360/8 * (0)+22.5}:4.5) -- (5.2,1.9);
        \node at (5.1,1.4) {$\cdots$};
        \draw({360/8 * (0)+22.5}:4.5) -- (5.4,1.7);

        \draw({360/8 * (4)+22.5}:4.5) -- (-5,-1.0);
        \draw({360/8 * (4)+22.5}:4.5) -- (-5.2,-1.9);
        \node at (-5.1,-1.4) {$\cdots$};
        \draw({360/8 * (4)+22.5}:4.5) -- (-5.4,-1.7);

        \draw({360/8 * (7)+22.5}:4.5) -- (5,-1.0);
        \draw({360/8 * (7)+22.5}:4.5) -- (5.2,-1.9);
        \node at (5.1,-1.4) {$\cdots$};
        \draw({360/8 * (7)+22.5}:4.5) -- (5.4,-1.7);

        \draw(-4.2,-2.4) -- (4.2,-2.4);
        \draw[fill] (-4.2,-2.4) circle (2.4pt);
        \node at (-4.2,-3.1) {$\vdots$};

        \draw(-4.2,-4.0) -- (4.2,-4.0);
        \draw[fill] (-4.2,-4.0) circle (2.4pt);

        \draw(-4.2,-1.7) -- (4.2,-1.7);
        
        \draw[fill] (4.2,-2.4) circle (2.4pt);
        \node at (4.2,-3.1) {$\vdots$};

        \draw[fill] (4.2,-4.0) circle (2.4pt);

        \node at (5.1, -4.0) {$y_{k-1}$};
        \node at (4.7, -2.4) {$y_{2}$};
        \node at (3.7, -1.3) {$y_{1}$};

        \node at (-5.1, -4.0) {$x_{k-1}$};
        \node at (-4.7, -2.4) {$x_{2}$};
        \node at (-3.7, -1.3) {$x_{1}$};

        
\end{tikzpicture}
\end{center}
\caption{The graphs $\Gamma$ (left) and $\Sigma$ (right). Edges in $E(\mathcal{C})$ are marked in bold. }\label{fig:GammaAndSigma}
\end{figure}

 The order of $\Sigma$ is clearly $|V(\Gamma)|-2$. Since $g+2 \geq 5$, $u$ and $v$ do not have any common neighbors in $\Gamma$ and we also have $E(\Gamma) \cap E' = \emptyset$. Therefore, $\Sigma$ is a simple $k$-regular graph. Moreover, $\Sigma$ contains a cycle of length $g$, namely $\mathcal{C}-u-v+x_1y_1$. 
 We will now show that $\Sigma$ contains no cycle of length smaller than $g$ nor a cycle of length $g+1$. Let $\mathcal{C'}$ be any cycle in $\Sigma$. If $E(\mathcal{C'}) \cap E' = \emptyset$, then $\mathcal{C'}$ is also a cycle in $\Gamma$. This means that $|E(\mathcal{C'})| \geq g+2$, because $\Gamma$ has girth $g+2$. 
 If $|E(\mathcal{C'}) \cap E'| \geq 1$, then we distinguish between the following 2 cases:

 \begin{enumerate}
     \item \textbf{Case 1:} $\mathcal{C'}$ contains a subpath $P$ between $x_i$ and $x_j$, or symmetrically between $y_i$ and $y_j$, such that $E(P) \cap E' = \emptyset$ for some $i,j \in \{1,2,\ldots,k-1\}, i \neq j$, and there is an edge $e_i \in E' \cap E(\mathcal{C}')$ incident with $x_i$ (or $y_i$, respectively) and $e_j \in E' \cap E(\mathcal{C}')$ incident with $x_j$ (or $y_j$, respectively). If $|E(P)|<g$, we obtain a contradiction because then $P+x_iu+x_ju$ (or $P+y_iu+y_ju$, respectively) would be a cycle of length strictly smaller than $g+2$ in $\Gamma$. So $|E(P)| \geq g$, and therefore $|E(\mathcal{C}')| \geq g+2$. 
     \item \textbf{Case 2:} $\mathcal{C'}$ contains a subpath $P$ between $x_i$ and $y_j$ such that $E(P) \cap E' = \emptyset$ for some $i,j \in \{1,2,\ldots,k-1\}$ (note that $i=j$ is allowed) and there is an edge $e_i \in E' \cap E(\mathcal{C}')$ incident with $x_i$ and $e_j \in E' \cap E(\mathcal{C}')$ incident with $y_j$. If $|E(P)|<g-1$, we obtain a contraction because then $P+x_iu+uv+vy_j$ would be a cycle of length strictly smaller than $g+2$ in $\Gamma$. If $|E(P)|=g-1$ and $|E(\mathcal{C'}) \cap E'|=1$, then $|E(\mathcal{C'})|=g$ and such cycles are allowed in $\Sigma$. If $|E(P)|=g-1$ and $|E(\mathcal{C'}) \cap E'|>1$, then $i \neq j$ and $\mathcal{C'}$ consists of $P$, the edges $e_i$ and $e_j$ and a path between $y_i$ and $x_j$ containing at least one edge. Therefore $|E(\mathcal{C}')| \geq g-1+1+1+1=g+2$. 
     If $|E(P)|=g$ we obtain a contradiction, because $P+x_iu+uv+vy_j$ would be a cycle of length $g+3$ in $\Gamma$. Finally, if $|E(P)| \geq g+1$, then $|E(\mathcal{C}')| \geq g+2$ and such cycles are allowed in $\Sigma$.
 \end{enumerate}
 \end{proof}
 
To conclude the section, note that the above proof took advantage of the existence of a single cycle $\mathcal{C}$ of length $g+2$. Should $\Gamma$ contain several $(g+2)$-cycles which are `sufficiently far' one from another, one could
apply the above vertex pair removal repeatedly and obtain even smaller $(k,g,\underline{g+1})$-graphs. 
Even though it appears likely that $(k,g+2,\underline{g+3})$-cages might contain several such cycles, we do not 
explore this possibility beyond the result stated in Theorem~\ref{th:monotonicityLike}.

		\section{Lower bound on $n(k,2t+1,\underline{2t+2})$}

  In this section, we aim to establish a lower bound on the order of $(k,2t+1,\underline{2t+2})$-graphs. Our main idea stems from the folklore knowledge that every vertex $v$ of a \((k, 2t+1)\)-graph $\Gamma$ is the root of a subgraph of $\Gamma$
  we shall call the \textit{Moore tree rooted at $v$} and denote by $\mathcal{T}_v(k,2t+1)$. This is a $k$-ary tree in which the distance between the root $v$ and any leaf is equal to $t$~\cite{ExooR1}. This can be seen by combining the fact that there cannot be an edge between two vertices for which the sum of the distances to $v$ is smaller than $2t$ and the fact that $\Gamma$ is $k$-regular. Generally, the subgraph \textit{induced} in $\Gamma$ by the vertices of this tree does not 
  necessarily have to be a tree. 
  Nevertheless, since the girth of $\Gamma$ is assumed to be equal to $2t+1$, $\Gamma$ must 
  contain at least one vertex $v$ such that the subgraph induced by the vertices of $\mathcal{T}_v(k,2t+1)$ is not a 
  tree, but contains edges incident to a pair of leaves of $\mathcal{T}_v(k,2t+1)$ (since this is the only way in which $v$ can be included in a cycle of length $2t+1$). We refer to the edges connecting 
  two leaves of such a Moore tree as \textit{horizontal edges}. Let \(S\) denote the set of leaves of such a Moore tree $\mathcal{T}_v(k,2t+1)$, and let \(E(S)\) denote the set of horizontal edges connecting the leaves of $\mathcal{T}_v(k,2t+1)$. In what follows, we will suppress the subscript $v$ as the actual root is not important
  (beyond the fact that we assume that \(E(S)\) is not empty). 
	
	\begin{lemma}\label{lem2}
 Let \(t \geq 1\) and let \(\Gamma\) be a $(k,2t+1,\underline{2t+2})$-graph. Then the number of horizontal edges of any Moore tree $\mathcal{T}(k,2t+1)$ contained in $\Gamma$ is bounded from above by the following inequality:
\[
|E(S)| \leq \frac{k(k - 1)^{t-1}}{2}.
\]
	\end{lemma}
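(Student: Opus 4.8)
The plan is to count the leaves $S$ of the Moore tree $\mathcal{T}(k,2t+1)$ and then argue that the horizontal edges in $E(S)$ induce a graph on these leaves whose maximum degree is severely restricted by the girth and the forbidden-cycle condition. First I would record the number of leaves: a Moore tree rooted at $v$ has $k$ vertices at distance $1$ and $k(k-1)^{i-1}$ vertices at distance $i$ for $2 \leq i \leq t$, so $|S| = k(k-1)^{t-1}$. The desired bound $|E(S)| \leq \frac{k(k-1)^{t-1}}{2} = \frac{|S|}{2}$ is exactly the statement that the average degree of the horizontal-edge graph on $S$ is at most $1$, equivalently that $E(S)$ is (at most) a perfect matching on the leaf set. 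So the real content is to show each leaf is incident with \emph{at most one} horizontal edge.

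The key step is therefore the following claim: no leaf $s \in S$ can be incident with two horizontal edges. The mechanism here uses the two cycle constraints together. Suppose a leaf $s$ were joined to two other leaves $s_1$ and $s_2$ by horizontal edges. Each leaf lies at distance exactly $t$ from the root $v$, so the tree path from $v$ to any leaf has length $t$. I would then trace the cycles created: the path $v \rightsquigarrow s_1$ followed by the edge $s_1 s$ followed by the path $s \rightsquigarrow v$ (reversed) forms a closed walk, and after accounting for the shared initial segment up to the lowest common ancestor of $s$ and $s_1$ in the tree, this yields a cycle through $v$. A horizontal edge between two leaves whose paths to $v$ diverge at the root gives a cycle of length $2t+1$ (the girth); one whose paths diverge at depth $1$ gives length $2t$, which would be too short unless forbidden, etc. The point is that the two horizontal edges $ss_1$ and $ss_2$, being incident to the common leaf $s$, together with the appropriate tree paths, would force a cycle whose length is either smaller than $2t+1$ (contradicting the girth) or exactly $2t+2$ (contradicting the $\underline{2t+2}$ condition, i.e. the absence of $(g+1)$-cycles).

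The main obstacle, and the step I would work out most carefully, is this case analysis on where the relevant tree paths branch, because the length of the cycle produced by a pair of horizontal edges depends on the depths of the lowest common ancestors of the leaf pairs involved. I would set up notation for the lowest common ancestor of $s$ with $s_1$ (at depth $a$) and of $s$ with $s_2$ (at depth $b$), giving a cycle through $s_1, s, s_2$ of length $(t-a) + 1 + 1 + (t-b) = 2t + 2 - a - b$, and then observe that to avoid a cycle shorter than the girth $2t+1$ we need $a + b \leq 1$, while to avoid the forbidden $(2t+2)$-cycle we need $a + b \neq 0$; but $a,b \geq 0$ forces either a girth violation or a $(g+1)$-cycle in every configuration. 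Combined with an analogous check that two distinct horizontal edges from $s$ cannot coincide, this rules out degree $\geq 2$ at every leaf. Once each leaf has horizontal degree at most $1$, summing degrees gives $2|E(S)| = \sum_{s \in S} \deg_{E(S)}(s) \leq |S| = k(k-1)^{t-1}$, which is precisely the claimed inequality.
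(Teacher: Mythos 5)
Your overall strategy coincides with the paper's: count the leaves ($|S|=k(k-1)^{t-1}$ is correct), show that every leaf is incident with at most one horizontal edge, and then sum degrees to get $2|E(S)|\leq|S|$. The leaf count and the degree-sum step are fine. The genuine gap is inside the key claim, where your cycle construction and the resulting case analysis are wrong.

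Concretely: the vertices $\mathrm{LCA}(s,s_1)$ (depth $a$) and $\mathrm{LCA}(s,s_2)$ (depth $b$) are both ancestors of $s$, so both lie on the tree path from $v$ to $s$; when $a\neq b$ they are distinct vertices, and the walk ``path from $\mathrm{LCA}(s,s_1)$ down to $s_1$, edge $s_1s$, edge $ss_2$, path from $s_2$ up to $\mathrm{LCA}(s,s_2)$'' is not closed. Hence $(t-a)+1+1+(t-b)=2t+2-a-b$ is not the length of any cycle, and even granting your formula the analysis does not close: the surviving case $a+b=1$ gives length $2t+1$, which is the girth and perfectly allowed, yet you assert that ``every configuration'' yields a contradiction. (A side error of the same kind: a horizontal edge between leaves whose paths diverge at depth $1$ gives an odd cycle of length $2t-1$, not $2t$; in fact girth alone forces $a=b=0$, since a horizontal edge between leaves with common ancestor at depth $d$ closes an odd cycle of length $2(t-d)+1$.) The repair is to work with the lowest common ancestor of the two \emph{neighbours} $s_1$ and $s_2$, say at depth $c$: the tree paths from that vertex down to $s_1$ and to $s_2$ are internally disjoint and avoid $s$, so together with the edges $s_1s$ and $ss_2$ they form a genuine cycle of \emph{even} length $2t+2-2c$. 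Parity then does the work your formula loses: since the girth $2t+1$ is odd, this even cycle has length either at most $2t$ (contradicting the girth, when $c\geq1$) or exactly $2t+2$ (contradicting the exclusion of $(g+1)$-cycles, when $c=0$). This dichotomy is exactly what underlies the paper's proof, which takes the two tree paths from the root and reads off the $(2t+2)$-cycle directly.
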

	
	\begin{proof}
 Assume that $\mathcal{T}(k,2t+1)$ is a Moore tree occurring as a subgraph of $\Gamma$ rooted at a vertex \(v\), and \(S\) denotes the leaves of $\mathcal{T}(k,2t+1)$. The number of vertices in \(S\) is given by:
    \[
    |S| = k(k - 1)^{t-1}.
    \]
Each vertex in \(S\) can be incident to at most one horizontal edge; otherwise, the two horizontal edges sharing a
vertex in \(S\) would form a cycle of length \(2t + 2\). To see this, suppose a vertex \(x \in S\) is connected by horizontal edges $xu_1$ and $xu_2$ to two different vertices \(u_1\) and \(u_2\) in \(S\). The cycle \(v \ldots u_1 x u_2 \ldots v\) 
formed by the shortest path from $v$ to $u_1$ followed by $u_1 x$, $x u_2 $, and the shortest path between $u_2$ and 
$v$ would have length \(2t + 2\), contradicting the requirement that $\Gamma$ has no  \((2t+2)\)-cycle (see Fig.~\ref{fig:exampleK3T2} for an example of this forbidden situation where $k=3$ and $t=2$). Since each vertex in \(S\) can be incident to at most one horizontal edge, the maximum number of horizontal edges is obtained by pairing vertices, which yields the desired bound:
    \[
    |E(S)| \leq \frac{|S|}{2} = \frac{k(k - 1)^{t-1}}{2}.
    \]

    \begin{figure}[ht]
        \centering
        \begin{tikzpicture}[scale=1.00]
        
        \draw (0,0)--(-2,-1);
        \draw[line width=1.5pt] (0,0)--(0,-1);
        \draw[line width=1.5pt] (0,0)--(2,-1);
        \draw (-2,-1)--(-2.6,-2);
        \draw (-2,-1)--(-1.4,-2);
        \draw[line width=1.5pt] (0,-1)--(-0.6,-2);
        \draw (0,-1)--(0.6,-2);
        \draw[line width=1.5pt] (2,-1)--(1.4,-2);
        \draw (2,-1)--(2.6,-2);
        
        \draw[fill] (0,0) circle (1.8pt);
        \draw[fill] (-2,-1) circle (1.8pt);
        \draw[fill] (0,-1) circle (1.8pt);
        \draw[fill] (2,-1) circle (1.8pt);
        \draw[fill] (-2.6,-2) circle (1.8pt);
        \draw[fill] (-1.4,-2) circle (1.8pt);
        \draw[fill] (-0.6,-2) circle (1.8pt);
        \draw[fill] (0.6,-2) circle (1.8pt);
        \draw[fill] (1.4,-2) circle (1.8pt);
        \draw[fill] (2.6,-2) circle (1.8pt);

        \draw[line width=1.5pt] (-2.6,-2) .. controls (-1.6,-2.3) and (-1.6,-2.3) .. (-0.6, -2);
        \draw[line width=1.5pt] (-2.6,-2) .. controls (-0.6,-2.7) and (-0.6,-2.7) .. (1.4, -2);
        
        \node at (-2.8, -1.7) {$x$};
        \node at (-0.8, -1.7) {$u_1$};
        \node at (1.2, -1.7) {$u_2$};
        \node at (-0.3, 0.1) {$v$};
        \end{tikzpicture}
        \caption{An example showing that a $(3,5,\underline{6})$ graph cannot have a vertex $x \in S$ connected by horizontal edges $xu_1$ and $xu_2$ to two different vertices $u_1$ and $u_2$ in $S$. The bold edges indicate a $6$-cycle.}\label{fig:exampleK3T2}
        \end{figure}
\end{proof}

Let $E'(S)$ be the set of edges incident to the leaves of $\mathcal{T}(k,2t+1)$, which are different from horizontal edges and different from the edges in $\mathcal{T}(k,2t+1)$; these are the edges connecting the leaves of $\mathcal{T}(k,2t+1)$ to 
vertices outside $\mathcal{T}(k,2t+1)$. We now prove the following proposition which can be viewed as an analogue of 
the Moore bound for classical cages.

	\begin{prop}\label{prop:lowerBound}
Let $ k\geq 3 $ and $ t \geq 1 $. Then $n(k,2t+1,\underline{2t+2})$ is bounded from below as follows:
\[
n(k,2t+1,\underline{2t+2}) \geq M(k, 2t+1) + |E'(S)| \geq M(k, 2t+1) + (k-2)k(k-1)^{t-1}.
\]
 
	\end{prop}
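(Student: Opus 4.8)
The plan is to lower-bound the order of $\Gamma$ by counting vertices in two stages: first the vertices of the Moore tree itself, and then a disjoint collection of extra vertices forced to exist outside the tree by the edges in $E'(S)$. Fix a root $v$ whose Moore tree $\mathcal{T}(k,2t+1)$ contains at least one horizontal edge (such a root exists since $\Gamma$ has girth $2t+1$, as explained before Lemma~\ref{lem2}). The number of vertices of the Moore tree is exactly the odd Moore bound $M(k,2t+1) = 1 + k + k(k-1) + \cdots + k(k-1)^{t-1}$, and all of these are genuinely distinct vertices of $\Gamma$ because no edge can join two vertices whose distances to $v$ sum to less than $2t$ (this is the standard argument that $\mathcal{T}(k,2t+1)$ embeds in $\Gamma$). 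So the first summand $M(k,2t+1)$ is accounted for.

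Next I would show that each edge of $E'(S)$ contributes a distinct new vertex lying strictly outside the Moore tree, giving the additive term $|E'(S)|$. By definition, an edge in $E'(S)$ joins a leaf of $\mathcal{T}(k,2t+1)$ to a vertex not in the tree. The key point to verify is that two different edges of $E'(S)$ cannot share their outside endpoint: if a single external vertex $w$ received two edges $x_1 w$ and $x_2 w$ from two leaves $x_1, x_2 \in S$, then concatenating the tree-path $v \ldots x_1$, the edges $x_1 w$, $w x_2$, and the tree-path $x_2 \ldots v$ would yield a closed walk of length $2t+2$; checking that this closed walk actually contains a cycle of length $2t+2$ (or a shorter forbidden cycle) is the crux. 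This is essentially the same mechanism as in the proof of Lemma~\ref{lem2}, only with the shared vertex pushed one level below the leaves, so I would argue that either we get a genuine $(2t+2)$-cycle (contradicting the $\underline{2t+2}$ hypothesis) or a shorter cycle (contradicting girth $2t+1$), and in either case reach a contradiction. Hence the outside endpoints are pairwise distinct and distinct from all tree vertices, so they contribute exactly $|E'(S)|$ fresh vertices, establishing the first inequality $n \geq M(k,2t+1) + |E'(S)|$.

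For the second inequality I would bound $|E'(S)|$ from below. Each of the $|S| = k(k-1)^{t-1}$ leaves has degree $k$ in $\Gamma$; exactly one of these incident edges goes up into the tree, so $k-1$ incident edges remain, and each such edge is either a horizontal edge (counted in $E(S)$) or an edge of $E'(S)$. Summing over the leaves and recalling that each horizontal edge is counted twice while each edge of $E'(S)$ is counted once, I get $|E'(S)| = (k-1)|S| - 2|E(S)|$. Applying the upper bound $|E(S)| \leq k(k-1)^{t-1}/2$ from Lemma~\ref{lem2} then gives $|E'(S)| \geq (k-1)k(k-1)^{t-1} - k(k-1)^{t-1} = (k-2)k(k-1)^{t-1}$, which is the claimed quantity.

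I expect the main obstacle to be the distinctness argument for the outside endpoints in the second paragraph: I must make sure the closed walk built from two tree-paths and the two external edges is a cycle of the forbidden length and not, say, a walk that backtracks or reuses an internal vertex. The safe way to handle this is to take the two leaves $x_1 \neq x_2$ and split into the case where their paths to $v$ diverge immediately below $v$ (giving a cycle of length $2t+2$ through $w$) and the case where they share an initial segment (which only shortens the resulting cycle, giving a cycle of length strictly less than $2t+2$ and hence below the girth, again a contradiction). Once this case analysis is in place, the counting is routine and the proposition follows.
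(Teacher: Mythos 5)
Your proposal is correct and follows essentially the same route as the paper's proof: count the $M(k,2t+1)$ Moore-tree vertices, show that two leaves cannot share an external neighbor (so each edge of $E'(S)$ supplies a distinct outside vertex), and then combine the degree count $|E'(S)| = (k-1)|S| - 2|E(S)|$ with Lemma~\ref{lem2}. In fact, your case split according to where the two tree-paths diverge treats the cycle-versus-closed-walk subtlety more explicitly than the paper does, which simply invokes the $(2t+2)$-cycle $v \ldots u_1 x u_2 \ldots v$.
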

	
	\begin{proof}

Let \(\Gamma\) be a $(k,2t+1,\underline{2t+2})$-graph containing the Moore tree $\mathcal{T}(k,2t+1)$ rooted at vertex $v$ as a subgraph. Note that two leaves $u_1$ and $u_2$ of $\mathcal{T}(k,2t+1)$ cannot have a common neighbor $x \in V(\Gamma) \setminus V(\mathcal{T}(k,2t+1))$, as that would give rise to a $ (g +1) $-cycle \(v \ldots u_1 x u_2 \ldots v\); where again $ v \ldots u_1 $
denotes the shortest path from $v$ to $u_1$ and $ u_2 \ldots v $ denotes the shortest path between $u_2$ and $v$. Since $|V(\mathcal{T}(k,2t+1))|=M(k,2t+1)$ and every edge in $E'(S)$ is incident
with a different vertex not contained in $ \mathcal{T}(k,2t+1)$, we obtain:
\[
n(k,2t+1,\underline{2t+2}) \geq M(k, 2t+1) + |E'(S)|.
\]
Since every vertex in $\Gamma$ has degree $k$, $|S|=k(k-1)^{t-1}$ and $|E'(S)|=|S|(k-1)-2|E(S)|$, applying Lemma~\ref{lem2} yields
\[
|E'(S)| \geq k(k-1)^{t} -2 \frac{k(k - 1)^{t-1}}{2},
\]
and therefore
\[
n(k,2t+1,\underline{2t+2}) \geq M(k,2t+1)+(k-2)k(k-1)^{t-1}.
\]
\end{proof}	

Note that a $(k,2t+1,\underline{2t+2})$-graph $\Gamma$ of order $M(k, 2t+1) + (k-2)k(k-1)^{t-1}$
contains no other vertices but the vertices of $\mathcal{T}(k,2t+1)$ and the vertices not contained in $\mathcal{T}(k,2t+1)$ which
are incident with the leaves of $\mathcal{T}(k,2t+1)$ (each through a unique edge in $E'(S)$).
This means that the subgraph of $\Gamma$ induced by the $(k-2)k(k-1)^{t-1}$ vertices not contained 
in $\mathcal{T}(k,2t+1)$ is a $(k-1)$-regular graph of girth at least $2t+1$ not containing a
$(2t+2)$-cycle. Hence, equality in the above bound yields the existence of a $(k-1)$-regular
graph of order$(k-2)k(k-1)^{t-1}$, girth at least $2t+1$, and not containing a $(2t+2)$-cycle.
Consulting Table~\ref{tab:overviewExactComputations}, we observe that in all the resolved cases
listed in there, the order of the cage exceeds the lower bound of Proposition~\ref{prop:lowerBound}.
In particular, $n(3,2t+1,\underline{2t+2}) > M(3, 2t+1) + 3\cdot 2^{t-1}$, for $ t=1,2,3,$ and $4$,
and $n(4,2t+1,\underline{2t+2}) > M(4, 2t+1) + 2 \cdot 4 \cdot 3^{t-1} $ for $ t=1 $ and $2$.
As argued above, the equality $n(4,2t+1,\underline{2t+2}) = M(4, 2t+1) + 2 \cdot 4 \cdot 3^{t-1} $
in case of $t=3$ would yield the existence of a $(3,7,\underline{8})$-graph or a $(3,9)$-graph of order $ 2 \cdot 4 \cdot 3^2 = 72 $. Since $n(3,7,\underline{8})= 36$ by Table~\ref{tab:overviewExactComputations} and $n(3,9)= 58$ \cite{ExooR1}, this 
equality may still be satisfied for $k=4$ and $t=3$.
In the last result of this section, we present a necessary condition for the above inequality to be strict (from which one can derive that equality for $k=4$ and $t=3$ is in fact not attainable).

	\begin{prop}\label{prop:strictlowerBound}
    Let $ k\geq 3 $ and $ t \geq 1 $. If $\Gamma$ is a $(k,2t+1,\underline{2t+2})$-graph of order 
    $M(k, 2t+1) + (k-2)k(k-1)^{t-1}$, then $4t+2$ must divide $(M(k, 2t+1) + (k-2)k(k-1)^{t-1})k(k-1)^{t-1}$.

	\end{prop}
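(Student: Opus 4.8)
The plan is to prove the divisibility by realizing the quotient $(M(k,2t+1)+(k-2)k(k-1)^{t-1})k(k-1)^{t-1}/(4t+2)$ as an honest count, namely as the number $Z$ of $(2t+1)$-cycles (shortest cycles) of $\Gamma$. I would obtain this via a double count of the incidences between vertices and shortest cycles, after first pinning down the rigid local structure that the exact-order hypothesis imposes at \emph{every} vertex.

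\emph{Step 1: the extremal structure is uniform over all roots.} Fix any vertex $v$ of $\Gamma$ and form its Moore tree $\mathcal{T}(k,2t+1)$ rooted at $v$. Since the girth equals $2t+1$, the ball of radius $t$ about $v$ is genuinely tree-like and has exactly $M(k,2t+1)$ vertices, so the number of vertices outside it is exactly $|V(\Gamma)|-M(k,2t+1)=(k-2)k(k-1)^{t-1}$. As in the proof of Proposition~\ref{prop:lowerBound}, no two leaves of $\mathcal{T}(k,2t+1)$ can share a neighbour outside the tree, since such a configuration produces a cycle of length at most $2t+2$ forbidden either by the girth or by the absence of $(2t+2)$-cycles; hence the $|E'(S)|$ edges leaving the leaves reach $|E'(S)|$ distinct outside vertices, giving $|E'(S)|\le (k-2)k(k-1)^{t-1}$. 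Combining this with the lower bound $|E'(S)|=k(k-1)^{t}-2|E(S)| \ge (k-2)k(k-1)^{t-1}$ supplied by Lemma~\ref{lem2} forces equality, so $|E(S)|=\tfrac{k(k-1)^{t-1}}{2}$ for \emph{every} choice of root: the leaves of each Moore tree are perfectly matched by horizontal edges.

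\emph{Step 2: counting shortest cycles through a fixed vertex.} I would then match, for each $v$, the horizontal edges of $\mathcal{T}(k,2t+1)$ with the $(2t+1)$-cycles through $v$. In one direction, a horizontal edge $ab$ joins two leaves whose geodesics to $v$ meet only at $v$, because a common ancestor at positive distance $d$ would yield a cycle of length $2(t-d)+1<2t+1$; thus the two geodesics together with $ab$ close up into a $(2t+1)$-cycle, and distinct horizontal edges give distinct cycles. In the other direction, on any $(2t+1)$-cycle through $v$ exactly two vertices lie at distance $t$ from $v$ and they are adjacent, so the cycle determines a unique horizontal edge; the two assignments are mutually inverse. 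Consequently the number of shortest cycles through $v$ equals $|E(S)|=\tfrac{k(k-1)^{t-1}}{2}$, \emph{independently of $v$}.

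\emph{Step 3: double count and conclusion.} Counting the pairs $(v,\mathcal{C})$ with $v$ a vertex on a shortest cycle $\mathcal{C}$ in two ways yields, on the vertex side, $|V(\Gamma)|\cdot\tfrac{k(k-1)^{t-1}}{2}$, and on the cycle side, $(2t+1)Z$. Hence $(2t+1)Z=\tfrac{1}{2}(M(k,2t+1)+(k-2)k(k-1)^{t-1})k(k-1)^{t-1}$, and since $Z$ is a nonnegative integer, $4t+2=2(2t+1)$ divides $(M(k,2t+1)+(k-2)k(k-1)^{t-1})k(k-1)^{t-1}$, as required. The main obstacle is Step~1: the divisibility is available precisely because the cycle count $\tfrac{k(k-1)^{t-1}}{2}$ is the \emph{same constant} at every vertex, which requires the perfect matching of leaves to be forced at every root rather than at a single root lying on a shortest cycle. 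The delicate point is leveraging the exact-order hypothesis uniformly and ruling out horizontal edges between leaves with a common ancestor other than $v$ (those violate the girth rather than the $(2t+2)$-cycle ban). Once the local structure is shown to be root-independent, the bijection of Step~2 and the double count of Step~3 are routine.
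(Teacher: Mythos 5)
Your proposal is correct and follows essentially the same route as the paper's proof: the exact-order hypothesis forces $|E(S)|=\tfrac{k(k-1)^{t-1}}{2}$ at every root, horizontal edges biject with girth cycles through the root, and the double count of vertex--cycle incidences yields divisibility by $2(2t+1)=4t+2$. The only difference is that you spell out the details (Steps 1 and 2) that the paper compresses into ``a close inspection of the proof of Proposition~\ref{prop:lowerBound}'' and the assertion that all girth cycles through $v$ arise from horizontal edges.
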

	
	\begin{proof}
  Let $\Gamma$ be a a $(k,2t+1,\underline{2t+2})$-graph of order $M(k, 2t+1) + (k-2)k(k-1)^{t-1}$. 
  A close inspection of the proof of Proposition~\ref{prop:lowerBound} yields that $|E(S)| = \frac{k(k - 1)^{t-1}}{2} $
  for all $ v \in V(\Gamma) $ and their corresponding Moore trees $\mathcal{T}_v(k,2t+1)$. Note that each 
  horizontal edge in $E(S)$ yields a girth cycle of length $2t+1$ through $v$; with all such cycles different
  and all the girth cycles through $v$ of this form. Hence, each $ v \in V(\Gamma)$ is contained in exactly
  $ \frac{k(k - 1)^{t-1}}{2} $ cycles of length $2t+1$. An easy double-counting of pairs formed by a vertex and
  a $(2t+1)$-cycle containing it gives the following identity 
  \[ |V(\Gamma) | \cdot \frac{k(k - 1)^{t-1}}{2} = r \cdot (2t+1) ,\]
  where $r$ is the (integral) total count of $(2t+1)$-cycles in $ \Gamma $. By our assumption, 
  $ |V(\Gamma) | = M(k, 2t+1) + (k-2)k(k-1)^{t-1} $, and therefore 
   \[ (M(k, 2t+1) + (k-2)k(k-1)^{t-1}) \cdot \frac{k(k - 1)^{t-1}}{2} = r \cdot (2t+1) . \]
   The divisibility condition now immediately follows.
   \end{proof}
   Applying Proposition~\ref{prop:strictlowerBound} to the pair $k=4$ and $t=3$ discussed just above its statement 
   yields that the order of a $(4,7,\underline{8})$-cage must be strictly larger
   than the bound given by Proposition~\ref{prop:lowerBound}.

\section{Exhaustive Generation of $(k,g,\underline{g+1})$-Graphs}
\label{sec:algo}
We propose an algorithm that can exhaustively enumerate all (pairwise non-isomorphic) $(k,g,\underline{g+1})$-graphs on $n$ vertices for given integers $k, g$ and $n$. Such algorithms have been successfully used for finding extremal graphs for problems related to the Cage Problem~\cite{GJ24,GJV24,JJP24,MMN98}. Our algorithm is based on similar ideas as these algorithms.

One observation used by our algorithm is that every $(k,g,\underline{g+1})$-graph $\Gamma$ (in fact every $(k,g)$-graph) contains a $(k,g)$ Moore tree $\mathcal{T}(k,g)$ as a subgraph. Hence, the graph obtained by adding $n-|\mathcal{T}(k,g)|=n-M(k,g)$ isolated vertices to the $(k,g)$ Moore tree occurs as a subgraph of any $(k,g,\underline{g+1})$-graph on $n$ vertices. The algorithm recursively adds edges to this graph (one edge per each step) in all possible ways to generate all $(k,g,\underline{g+1})$-graphs. In order to speed up the algorithm, several additional optimizations are used:
\begin{enumerate}
    \item Often, adding one set of edges results in a graph isomorphic with the graph obtained by adding a different set of edges. In such cases, the algorithm will only continue adding edges to the first graph and prunes the second isomorphic graph from the recursion tree. In order to verify whether the recursive function was already called before with as argument a graph which is isomorphic with the current graph, the algorithm computes a canonical form of the graph using the \textit{nauty} package~\cite{MP14} (two graphs are isomorphic if they have the same canonical form).
    \item Although computing the previously mentioned canonical form is computationally expensive, it is typically worthwhile as it prevents a lot of duplicate computations. Nevertheless, we aim to avoid computing the canonical form when it is clear that isomorphic copies will arise. To achieve this, the algorithm keeps track of the set of all isolated vertices and leverages the fact that adding an edge $uv$ or $uw$ (for any vertex $u$) results in isomorphic graphs when $v$ and $w$ are isolated. Therefore, the algorithm only explores the first option and does not explicitly calculate the canonical form.
    \item Every graph containing a cycle of length smaller than $g$ can be pruned.
    \item Every graph containing a cycle of length $g+1$ can be pruned.
    \item For each vertex, the algorithm keeps track of which edges are eligible to be added incident with this vertex (without resulting in a graph that will be pruned). If there is a vertex which has too few eligible edges (i.e., it cannot obtain degree $k$) the current graph is pruned as it will never result in a $(k,g,\underline{g+1})$-graph.
    \item In each recursion step the algorithm considers precisely one edge of the graph and creates two branches corresponding to whether or not the edge was added. In order to restrict the search space as soon as possible, the algorithm considers the edge incident with the vertex which has the fewest eligible incident edges available.
    \item The algorithm uses appropriate data structures to efficiently support the above operations. For example, the canonical forms are stored in a data structure that allows for fast insertion and lookup and all small sets are stored as bitsets, which support all standard set operations using hardware-friendly operations. 
\end{enumerate}

The pseudocode of this algorithm is shown in Algorithm~\ref{algo:genAlgo} (the main algorithm) and Algorithm~\ref{algo:recAddEdges} (the function that recursively adds the edges).

\begin{algorithm}[ht!]
\caption{generateAllGraphs(Integer $n$, Integer $k$, Integer $g$)}
\label{algo:genAlgo}
  \begin{algorithmic}[1]
		\STATE // All $(k,g,\underline{g+1})$-graphs will be generated by this function
		\IF{$n$ is smaller than any known lower bound}
			\RETURN
            \ENDIF
		\STATE $\Gamma \gets \text{disjoint union of the Moore tree }\mathcal{T}(k,g)\text{ and }n-M(k,g)\text{ isolated vertices}$
		\STATE $\text{eligibleEdges} \gets \text{calculateEligibleEdges}(\Gamma)$
		\STATE $\text{addEachEdgeRecursively}(n,k,g,\Gamma,\text{eligibleEdges})$
  \end{algorithmic}
\end{algorithm}

\begin{algorithm}[ht!]
\caption{addEachEdgeRecursively(Integer $n$, Integer $k$, Integer $g$, Graph $\Gamma=(V,E)$, Set eligibleEdges)}
\label{algo:recAddEdges}
  \begin{algorithmic}[1]
		\STATE // Consider one edge of $\Gamma$ in every recursive step and branch
        \IF{A pruning rule from Section~\ref{sec:algo} is applicable}
		  \RETURN
        \ENDIF
        \STATE
		\STATE // Every vertex has degree $k$, no more edges should be added
		\IF{$|E|=\frac{nk}{2}$}
			\IF{$\Gamma$ is a $(k,g,\underline{g+1})$-graph}
				\STATE Output $\Gamma$
			\ENDIF
			\RETURN
        \ENDIF
        \STATE
        \STATE // Add next edge incident with the vertex that has the fewest eligible edges
        \STATE $u \gets \arg\min_{v \in V(\Gamma), deg_{\Gamma}(v)<k}(|\{e\text{ is incident with }v\text{ and }e \in \text{eligibleEdges}\}|)$
        \STATE $e \gets \text{arbitrary edge from eligibleEdges incident with }u$
        \STATE 
        \STATE // Recursion branch 1: add this edge to $\Gamma$
        \STATE $\Gamma' \gets (V,E\cup\{e\})$
        \STATE $\text{newEligibleEdges} \gets \text{update}(\text{eligibleEdges},\Gamma')$
        \STATE $\text{addEachEdgeRecursively}(n,k,g,\Gamma',\text{newEligibleEdges})$
        \STATE 
        \STATE // Recursion branch 2: do not add this edge to $\Gamma$
        \STATE $\text{newEligibleEdges} \gets \text{eligibleEdges}\setminus\{e\}$
        \STATE $\text{addEachEdgeRecursively}(n,k,g,\Gamma,\text{newEligibleEdges})$
  \end{algorithmic}
\end{algorithm}

\section{Computational Results}
\label{sec:computationalResults}
\subsection{Determining $n(k,g,\underline{g+1})$ Using Algorithm~\ref{algo:genAlgo}}

We implemented\footnote{We make the source code and data related to this paper publicly available at:\\\url{https://github.com/JorikJooken/KGNoGPlus1Graphs}.} Algorithm~\ref{algo:genAlgo} and executed it on a supercomputer for various pairs $(k,g)$ to determine $n(k,g,\underline{g+1})$. The total CPU-time of all computations in this paper was around 9 CPU-weeks. More specifically, if the algorithm terminates with parameters $k, g, n'$ for all $n'<n$ without finding any $(k,g,\underline{g+1})$-graphs, then $n(k,g,\underline{g+1}) \geq n$, because the algorithm is exhaustive. On the other hand, if the algorithm finds any $(k,g,\underline{g+1})$-graph on $n$ vertices then clearly $n(k,g,\underline{g+1}) \leq n$ (regardless of whether the algorithm terminates within the given time). This results in the values of $n(k,g,\underline{g+1})$ as shown in Table~\ref{tab:overviewExactComputations}.

\begin{table}[h!]
\centering
\begin{tabular}{||c|c|c|c|c|c||}
\hline \hline
degree $k$ & girth $g$ & Proposition~\ref{prop:lowerBound} & $n(k,g,\underline{g+1}) $ & \# $(k,g,\underline{g+1})$-cages & \# vertex orbits\\
\hline
$3$ & $3$ & $7$ & $10$ & 2 & 3 and 3\\
$3$ & $5$ & $16$ & $18$ & 1 & 2\\
$3$ & $7$ & $34$ & $36$ & 1 & 2\\
$3$ & $9$ & $70$ & $76$ & 1 & 2\\
$4$ & $3$ & $13$ & $15$ & 2 & 1 and 3\\
$4$ & $5$ & $41$ & $45$ & 1 & 2\\
$5$ & $3$ & $21$ & $26$ & 2 & 4 and 15\\
$6$ & $3$ & $31$ & $34$ & 1 & 3\\
\hline \hline 
\end{tabular}
\caption{A summary of $(k,g,\underline{g+1})$-cages}
\label{tab:overviewExactComputations}
\end{table}

For all $(k,g)$-pairs shown in this table, we were able to determine $n(k,g,\underline{g+1})$ exactly as well as determine all $(k,g,\underline{g+1})$-graphs achieving this order. We make all these graphs publicly available at the House of Graphs~\cite{CDG23}; they can be found by searching for the term “without $(g+1)$-cycles”. We also searched for $(3,11,\underline{12})$-graphs, $(4,7,\underline{8})$-graphs, $(5,5,\underline{6})$-graphs and $(6,5,\underline{6})$-graphs (without waiting for the algorithm to terminate), but were unable to find any such graphs using Algorithm~\ref{algo:genAlgo}. As is clear from this table, the lower bound $M(k, 2t+1) + (k-2)k(k-1)^{t-1}$ that we determined in the current paper is not sharp in any of the cases but not very far off the exact value $n(k,g,\underline{g+1})$ either. In all of these cases, there are at most two non-isomorphic $(k,g,\underline{g+1})$-cages. The $(3,g,\underline{g+1})$-cages have between 2 and 3 vertex orbits; the $(3,3,\underline{4})$-cages, $(3,5,\underline{6})$-cages and $(3,7,\underline{8})$-cages are shown in Fig.~\ref{fig:33No4},~\ref{fig:35No6} and~\ref{fig:37No8}, respectively. Some of these graphs are well-known, for example the tricorn is a $(3,3,\underline{4})$-cage and the generalized Petersen graph $GP(9,2)$ is the unique $(3,5,\underline{6})$-cage.

        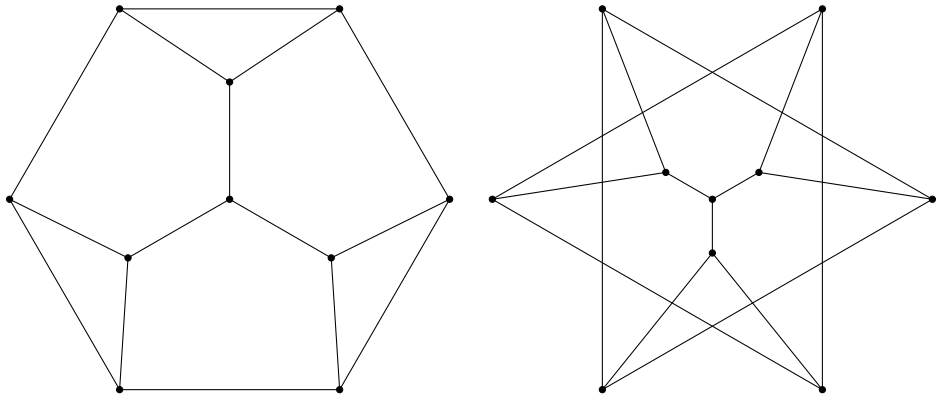
\begin{figure}[h!]
\begin{center}
\begin{tikzpicture}[scale=0.65]

         \foreach \x in {0,1,2}{
        \draw[fill] (\x*360/3-30:2.4) circle (1.8pt);
        \draw (\x*360/3-30:2.4) -- ({360/6 * (2*\x )}:4.5);
        \draw (\x*360/3-30:2.4) -- ({360/6 * (2*\x-1)}:4.5);
        \draw (\x*360/3-30:2.4) -- (0,0);
        }
        \draw[fill] (0,0) circle (1.8pt);
        \foreach \x in {0,1,...,5}{
        \draw[fill] (\x*360/6:4.5) circle (1.8pt);
        \draw ({360/6 * (\x+1)}:4.5) -- ({360/6 * (\x )}:4.5);
        }
        
\end{tikzpicture} \quad
\begin{tikzpicture}[scale=0.65]

         \foreach \x in {0,1,2}{
        \draw[fill] (\x*360/3-30+180:1.1) circle (1.8pt);
        \draw (\x*360/3-30+180:1.1) -- ({360/6 * (2*\x+2)}:4.5);
        \draw (\x*360/3-30+180:1.1) -- ({360/6 * (2*\x+3)}:4.5);
        \draw (\x*360/3-30+180:1.1) -- (0,0);
        }
        \draw[fill] (0,0) circle (1.8pt);
        \foreach \x in {0,1,...,5}{
        \draw[fill] (\x*360/6:4.5) circle (1.8pt);
        \draw ({360/6 * (\x)}:4.5) -- ({360/6 * (\x+2)}:4.5);
        }
        
\end{tikzpicture}
\end{center}
\caption{The two $(3,3,\underline{4})$-cages on 10 vertices. The leftmost graph is also known as the tricorn.}\label{fig:33No4}
\end{figure}

        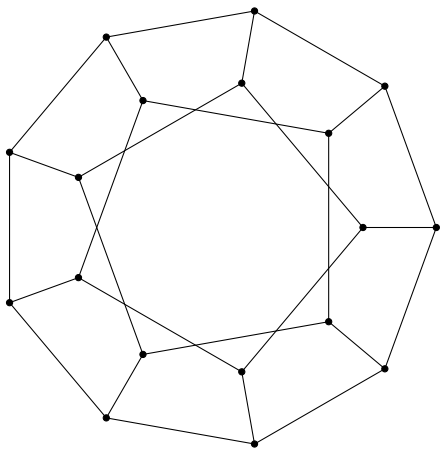
\begin{figure}[ht]
        \centering
        \begin{tikzpicture}[scale=0.65]
        \foreach \x in {0,1,...,8}{
        \draw[fill] (\x*360/9:4.5) circle (1.8pt);
        \draw[fill] (\x*360/9:3) circle (1.8pt);
        \draw (\x*360/9:3)--(\x*360/9:4.5);
        \draw (\x*360/9+360/9:4.5)--(\x*360/9:4.5);
        \draw (\x*360/9+720/9:3)--(\x*360/9:3);
        }
        \end{tikzpicture}
        \caption{The unique (3,5,\underline{6})-cage (the generalised Petersen graph GP(9,2)).}\label{fig:35No6}
        \end{figure}

\begin{figure}[h!]
\begin{center}
\begin{tikzpicture}[scale=0.8]
  \def\sides{9}
  \def\radius{3}

\begin{scope}[shift={(-4.0,-4.0)}]
  \foreach \i in {1,...,\sides} {
    \draw ({360/\sides * (\i + 1)}:\radius) --
    ({360/\sides * (\i)}:\radius);
  }
  \fill[red] ({360/\sides * 1}:\radius) circle (3pt);
  \fill[blue] ({360/\sides * 2}:\radius) circle (3pt);
  \fill[brown] ({360/\sides * 3}:\radius) circle (3pt);
  \fill[yellow] ({360/\sides * 4}:\radius) circle (3pt);
  \fill[black] ({360/\sides * 5}:\radius) circle (3pt);
  \fill[green] ({360/\sides * 6}:\radius) circle (3pt);
  \fill[cyan] ({360/\sides * 7}:\radius) circle (3pt);
  \fill[orange] ({360/\sides * 8}:\radius) circle (3pt);
  \fill[gray] ({360/\sides * 9}:\radius) circle (3pt);
\end{scope}

\begin{scope}[shift={(4.0,-4.0)}]
  \foreach \i in {1,...,\sides} {
    \draw ({360/\sides * (\i + 1)}:\radius) --
    ({360/\sides * (\i)}:\radius);
  }
  \fill[cyan] ({360/\sides * 1}:\radius) circle (3pt);
  \fill[black] ({360/\sides * 2}:\radius) circle (3pt);
  \fill[brown] ({360/\sides * 3}:\radius) circle (3pt);
  \fill[red] ({360/\sides * 4}:\radius) circle (3pt);
  \fill[orange] ({360/\sides * 5}:\radius) circle (3pt);
  \fill[green] ({360/\sides * 6}:\radius) circle (3pt);
  \fill[yellow] ({360/\sides * 7}:\radius) circle (3pt);
  \fill[blue] ({360/\sides * 8}:\radius) circle (3pt);
  \fill[gray] ({360/\sides * 9}:\radius) circle (3pt);
\end{scope}

\begin{scope}[shift={(0,5)}]
  \foreach \i in {1,...,\sides} {
    \draw ({360/\sides * (\i + 1)}:\radius) --
    ({360/\sides * (\i)}:\radius);
  }
  \fill[red] ({360/\sides * 1}:\radius) circle (3pt);
  \fill[green] ({360/\sides * 2}:\radius) circle (3pt);
  \fill[blue] ({360/\sides * 3}:\radius) circle (3pt);
  \fill[cyan] ({360/\sides * 4}:\radius) circle (3pt);
  \fill[brown] ({360/\sides * 5}:\radius) circle (3pt);
  \fill[orange] ({360/\sides * 6}:\radius) circle (3pt);
  \fill[yellow] ({360/\sides * 7}:\radius) circle (3pt);
  \fill[gray] ({360/\sides * 8}:\radius) circle (3pt);
  \fill[black] ({360/\sides * 9}:\radius) circle (3pt);
\end{scope}

\fill[red] (-1,-1) circle (3pt);
\fill[green] (-1,0) circle (3pt);
\fill[blue] (-1,1) circle (3pt);

\fill[cyan] (0,-1) circle (3pt);
\fill[brown] (0,0) circle (3pt);
\fill[orange] (0,1) circle (3pt);

\fill[yellow] (1,-1) circle (3pt);
\fill[gray] (1,0) circle (3pt);
\fill[black] (1,1) circle (3pt);

\node at (-1.3, 0.8) {$a_0$};
\node at (-0.3, 0.8) {$a_1$};
\node at (0.7, 0.8) {$a_2$};

\node at (-1.3, -0.2) {$a_3$};
\node at (-0.3, -0.2) {$a_4$};
\node at (0.7, -0.2) {$a_5$};

\node at (-1.3, -1.2) {$a_6$};
\node at (-0.3, -1.2) {$a_7$};
\node at (0.7, -1.2) {$a_8$};

\node at (-1.3, 7.0) {$b_0$};%
\node at (0.4, 7.4) {$b_3$};%
\node at (1.7, 6.9) {$b_6$};%
\node at (2.6, 5.0) {$b_2$};%
\node at (1.8, 3.3) {$b_5$};%
\node at (0.4, 2.5) {$b_8$};
\node at (-1.3, 2.7) {$b_1$};%
\node at (-2.2, 4.1) {$b_4$};%
\node at (-2.2, 5.8) {$b_7$};

\node at (2.7, -2.0) {$c_4$};
\node at (4.4, -1.6) {$c_2$};
\node at (5.7, -2.1) {$c_7$};
\node at (6.6, -4.0) {$c_5$};
\node at (5.8, -5.7) {$c_0$};
\node at (4.4, -6.5) {$c_8$};
\node at (2.7, -6.3) {$c_3$};
\node at (1.8, -4.9) {$c_1$};
\node at (1.8, -3.2) {$c_6$};

\node at (-5.3, -2.0) {$d_4$};
\node at (-3.6, -1.6) {$d_0$};
\node at (-2.3, -2.1) {$d_6$};
\node at (-1.4, -4.0) {$d_5$};
\node at (-2.2, -5.7) {$d_1$};
\node at (-3.6, -6.5) {$d_7$};
\node at (-5.3, -6.3) {$d_3$};
\node at (-6.2, -4.9) {$d_2$};
\node at (-6.2, -3.2) {$d_8$};
\end{tikzpicture}
\end{center}
\caption{The unique (3,7,\underline{8})-cage, wherein the 9 coloured vertices in the middle are adjacent to the vertices on the three different cycles with the same colour (these edges are not drawn). In other words, the graph has edge set $\{a_iz_i~|~z \in \{b,c,d\},i \in \{0,1,\ldots,8\}\} \cup \{z_iz_{i+1}~|~z \in \{b,c,d\},i \in \{0,1,\ldots,8\}\}$ (indices are taken modulo 9).}\label{fig:37No8} 
\end{figure}

One of the two $(4,3,\underline{4})$-cages is vertex-transitive (it is again well-known, namely it is the line graph of the Petersen graph), whereas the other one has 3 vertex orbits (see Fig.~\ref{fig:43No4}). The $(5,3,\underline{4})$-cages illustrate that $(k,g,\underline{g+1})$-cages are not necessarily very symmetrical: one of the graphs has 15 vertex orbits and its automorphism group only contains two automorphisms. 

We also point out that for the $(k,g)$-cage problem, it is known that $n(k,g)<n(k,g+1)$~\cite{S67}, whereas for the $(k,g,\underline{g+1})$-cage problem the values in the table indicate that $n(k,g,\underline{g+1})>n(k,g+1,\underline{g+2})$ is possible when $g$ is odd. In fact, for the cases in Table~\ref{tab:overviewExactComputations} this inequality holds for \textit{all} odd $g$; which can be deduced from the fact that all corresponding $(k,g+1)$-cages are
known to be bipartite \cite{ExooR1}, and therefore $n(k,g+1,\underline{g+2})=n(k,g+1)$ for these. However, as we demonstrated in Theorem~\ref{th:monotonicityLike}, $n(k,g,\underline{g+1}) \leq n(k,g+2,\underline{g+3})-2$. Finally, we stress that these orders also illustrate the need for the specialized algorithm discussed in this paper. For example, there are around $7.59 \cdot 10^{25}$ pairwise non-isomorphic connected 5-regular graphs on 26 vertices~\cite{OEIS} and we show that only two of these graphs are $(5,3,\underline{4})$-graphs.


        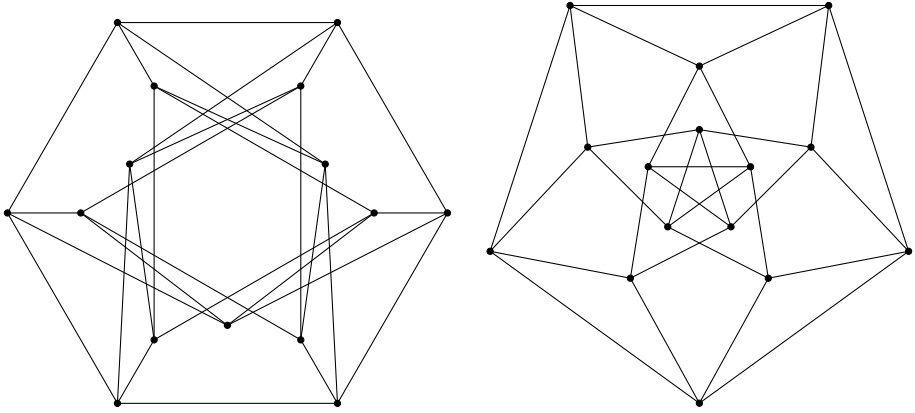
\begin{figure}[h!]
\begin{center}
\begin{tikzpicture}[scale=0.65]
        \foreach \x in {0,1,...,5}{
        \draw[fill] (\x*360/6:4.5) circle (1.8pt);
        \draw[fill] (\x*360/6:3) circle (1.8pt);

        \draw ({360/6 * (\x + 1)}:4.5) -- ({360/6 * \x}:4.5);
        \draw ({360/6 * (\x)}:3) -- ({360/6 * \x}:4.5);
        }
        \draw ({360/6 * (0)}:3) -- ({360/6 * 2}:3);
        \draw ({360/6 * (0)}:3) -- ({360/6 * 4}:3);
        \draw ({360/6 * (4)}:3) -- ({360/6 * 2}:3);

        \draw ({360/6 * (1)}:3) -- ({360/6 * 3}:3);
        \draw ({360/6 * (1)}:3) -- ({360/6 * 5}:3);
        \draw ({360/6 * (3)}:3) -- ({360/6 * 5}:3);

        \draw (2,1) -- ({360/6 * 5}:3);
        \draw (2,1) -- ({360/6 * 5}:4.5);
        \draw (2,1) -- ({360/6 * 2}:3);
        \draw (2,1) -- ({360/6 * 2}:4.5);
        \draw[fill] (2,1) circle (1.8pt);

        \draw (0,-2.3) -- ({360/6 * 0}:3);
        \draw (0,-2.3) -- ({360/6 * 0}:4.5);
        \draw (0,-2.3) -- ({360/6 * 3}:3);
        \draw (0,-2.3) -- ({360/6 * 3}:4.5);
        \draw[fill] (0,-2.3) circle (1.8pt);

        \draw (-2,1) -- ({360/6 * 1}:3);
        \draw (-2,1) -- ({360/6 * 1}:4.5);
        \draw (-2,1) -- ({360/6 * 4}:3);
        \draw (-2,1) -- ({360/6 * 4}:4.5);
        \draw[fill] (-2,1) circle (1.8pt);
\end{tikzpicture} \quad
\begin{tikzpicture}[scale=0.65]
         \foreach \x in {0,1,...,4}{
        \draw[fill] (\x*360/5+18:1.1) circle (1.8pt);
        \draw[fill] (\x*360/5+18:2.4) circle (1.8pt);
        \draw[fill] (\x*360/5+18+180:4.5) circle (1.8pt);

        \draw ({360/5 * (\x + 1)+18+180}:4.5) -- ({360/5 * (\x )+18+180}:4.5);
        \draw ({360/5 * (\x + 2)+18}:1.1) -- ({360/5 * (\x )+18}:1.1);
        }
        \draw ({360/5 * (1)+18}:2.4) -- ({360/5 * (0)+18}:1.1);
        \draw ({360/5 * (4)+18}:2.4) -- ({360/5 * (0)+18}:1.1);

        \draw ({360/5 * (1)+18}:2.4) -- ({360/5 * (2)+18}:1.1);
        \draw ({360/5 * (3)+18}:2.4) -- ({360/5 * (2)+18}:1.1);

        \draw ({360/5 * (0)+18}:2.4) -- ({360/5 * (1)+18}:1.1);
        \draw ({360/5 * (2)+18}:2.4) -- ({360/5 * (1)+18}:1.1);

        \draw ({360/5 * (2)+18}:2.4) -- ({360/5 * (3)+18}:1.1);
        \draw ({360/5 * (4)+18}:2.4) -- ({360/5 * (3)+18}:1.1);

        \draw ({360/5 * (0)+18}:2.4) -- ({360/5 * (4)+18}:1.1);
        \draw ({360/5 * (3)+18}:2.4) -- ({360/5 * (4)+18}:1.1);

        \draw ({360/5 * (3)+18+180}:4.5) -- ({360/5 * (1)+18}:2.4);
        \draw ({360/5 * (4)+18+180}:4.5) -- ({360/5 * (1)+18}:2.4);

        \draw ({360/5 * (4)+18+180}:4.5) -- ({360/5 * (2)+18}:2.4);
        \draw ({360/5 * (0)+18+180}:4.5) -- ({360/5 * (2)+18}:2.4);

        \draw ({360/5 * (0)+18+180}:4.5) -- ({360/5 * (3)+18}:2.4);
        \draw ({360/5 * (1)+18+180}:4.5) -- ({360/5 * (3)+18}:2.4);

        \draw ({360/5 * (1)+18+180}:4.5) -- ({360/5 * (4)+18}:2.4);
        \draw ({360/5 * (2)+18+180}:4.5) -- ({360/5 * (4)+18}:2.4);

        \draw ({360/5 * (2)+18+180}:4.5) -- ({360/5 * (0)+18}:2.4);
        \draw ({360/5 * (3)+18+180}:4.5) -- ({360/5 * (0)+18}:2.4);
\end{tikzpicture}
\end{center}
\caption{The two $(4,3,\underline{4})$-cages on 15 vertices. The graph on the right is the line graph of the Petersen graph.}\label{fig:43No4}
\end{figure}

\subsection{Upper Bounds on $n(k,g,\underline{g+1})$ Based on Other Constructions}
For the $(k,g)$-cage problem, several record $(k,g)$-graphs are obtained by a \textit{regular lift of a voltage graph}
\cite{ExooR1}, a concept which we now recall. For a graph $\Gamma$ (where multiple edges and loops are allowed), $D(\Gamma)$ denotes the set of
all darts of $\Gamma$ obtained by replacing each edge and loop of $\Gamma$ by a pair of opposing oriented darts $e$ and $e^{-1}$. For any finite group $G$, a \textit{voltage graph} is the pair $(\Gamma,\alpha)$, where $\Gamma$ is a graph and $\alpha$ is a \textit{voltage assignment}: a function that maps every dart $e \in D(\Gamma)$ to a group element in $G$ subject only to the property $\alpha(e^{-1})=\alpha(e)^{-1}$. The \textit{regular lift} (or sometimes called the
\textit{derived graph}) $\Gamma^{\alpha}=(V^{\alpha},E^{\alpha})$ of the voltage graph $(\Gamma,\alpha)$ has as vertex set $V^{\alpha}=V(\Gamma) \times G$ and for each dart $e=(u,v) \in D(\Gamma)$ and for each group element $h \in G$, there is an edge in $E^{\alpha}$ between $(u,h)$ and $(v,h \cdot \alpha(e))$. The starting graph $\Gamma$ is referred to as the \textit{base graph} of the construction. We remark that by definition any regular voltage graph lift of a $k$-regular graph $\Gamma$ is again $k$-regular. Also note that with this terminology, the canonical double cover of a graph $\Gamma$ is the regular lift of the voltage graph $(\Gamma,\alpha)$, where $\alpha$ maps every dart in $D(\Gamma)$ to $1$ in the additive group $\mathbb{Z}_2$.

In our paper, we restrict ourselves to investigating one specific base graph.
Let $K_{1,3}^{\text{loop}}$ be a cubic graph on 4 vertices obtained by adding a loop to each leaf of the tree $K_{1,3}.$ We discovered that the unique $(3,7,\underline{8})$-cage and the unique $(3,9,\underline{10})$-cage are both
regular lifts of the base graph $K_{1,3}^{\text{loop}}$.
The corresponding groups are $(\mathbb{Z}_9,+_9)$ and $(\mathbb{Z}_{19},+_{19})$, respectively, and the corresponding voltage assignments $\alpha$ and $\alpha'$ are as shown in Fig.~\ref{fig:37No8And39No10}. 

\begin{figure}[h!]
\begin{center}

\begin{tikzpicture}[scale=0.65]

    \foreach \x in {0,1,2}{
        \draw[fill] (\x*360/3-30:2.4) circle (1.8pt);
        \draw (\x*360/3-30:2.4) -- (0,0);
    }
    \draw[fill] (0,0) circle (1.8pt); 

    \foreach \x in {0,1,2} {
        \draw (\x*360/3-30:2.4) to[in=375+\x*120, out=285+\x*120, looseness=60] (\x*360/3-32:2.4);
    }
    \node at (0.3,1.5) {0};
    \node at (1.0,-0.2) {0};
    \node at (-1.0,-0.2) {0};
    \node at (0.7,2.9) {1};
    \node at (2.5,-0.8) {2};
    \node at (-2.5,-0.8) {4};
    
\end{tikzpicture} \quad
\begin{tikzpicture}[scale=0.65]

    \foreach \x in {0,1,2}{
        \draw[fill] (\x*360/3-30:2.4) circle (1.8pt);
        \draw (\x*360/3-30:2.4) -- (0,0);
    }
    \draw[fill] (0,0) circle (1.8pt); 

    \foreach \x in {0,1,2} {
        \draw (\x*360/3-30:2.4) to[in=375+\x*120, out=285+\x*120, looseness=60] (\x*360/3-32:2.4);
    }
    \node at (0.3,1.5) {0};
    \node at (1.0,-0.2) {0};
    \node at (-1.0,-0.2) {0};
    \node at (0.7,2.9) {1};
    \node at (2.5,-0.8) {7};
    \node at (-2.5,-0.8) {8};
    
\end{tikzpicture}
\end{center}
\caption{The $(3,7,\underline{8})$-cage (left) and $(3,9,\underline{10})$-cage (right) are regular lifts of a voltage graph using voltage groups $(\mathbb{Z}_9,+_9)$ and $(\mathbb{Z}_{19},+_{19})$, respectively. }\label{fig:37No8And39No10}
\end{figure}
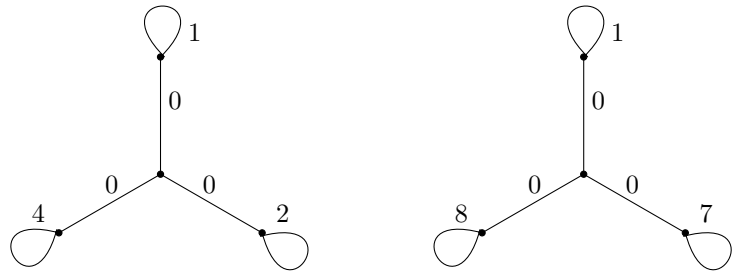

We now recall a theorem by Exoo and the second author~\cite{ExooR2}:

\begin{theorem}[Th. 3.3 in~\cite{ExooR2}]
    For every graph $\Gamma$ of girth $g$ and every integer $k>1$, there exists a regular voltage graph lift of $\Gamma$ with girth at least $kg.$
\end{theorem}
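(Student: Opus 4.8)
The plan is to reduce the existence of a high-girth lift to a purely group-theoretic statement about the net voltages of short closed walks in $\Gamma$, and then to settle that statement using the residual finiteness of free groups.

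First I would record the standard projection principle for lifts: the natural map $\Gamma^{\alpha}\to\Gamma$ sending $(u,h)\mapsto u$ carries every walk in $\Gamma^{\alpha}$ to a walk of the same length in $\Gamma$, and a closed walk in $\Gamma^{\alpha}$ projects to a closed walk $W=e_1e_2\cdots e_\ell$ in $\Gamma$ whose \emph{net voltage} $\alpha(W):=\alpha(e_1)\alpha(e_2)\cdots\alpha(e_\ell)$ equals the identity of $G$. Moreover, a \emph{cycle} in $\Gamma^{\alpha}$ (a closed walk with pairwise distinct vertices) can never traverse a dart immediately followed by its inverse, so its projection is a \emph{non-backtracking} closed walk of the same length. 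Consequently, if $\Gamma^{\alpha}$ contained a cycle of length $\ell<kg$, then $\Gamma$ would contain a non-backtracking closed walk $W$ of length $\ell<kg$ with $\alpha(W)=1_G$. The theorem therefore reduces to the following: \emph{construct a finite group $G$ and a voltage assignment $\alpha$ so that $\alpha(W)\neq 1_G$ for every non-backtracking closed walk $W$ of length less than $kg$.} There are only finitely many such walks, because $\Gamma$ is finite and their length is bounded, and this finiteness is what makes the construction feasible.

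Next I would produce one infinite voltage assignment that works and then finitize it. Assuming $\Gamma$ connected (otherwise I argue component-wise), I fix a spanning tree $T$, let $F=\pi_1(\Gamma)$ be the free group on the co-tree edges, and let $\beta\colon D(\Gamma)\to F$ be the canonical assignment giving every tree dart the identity and every co-tree dart the corresponding free generator (or its inverse). The lift of $(\Gamma,\beta)$ is exactly the universal cover of $\Gamma$, which is a tree and hence contains no cycle at all; equivalently, lifting any non-backtracking closed walk $W$ to the universal cover yields a reduced, and therefore non-closed, path, so $\beta(W)\neq 1_F$. Thus every non-backtracking closed walk has \emph{nontrivial} net voltage in $F$, and in particular the finitely many walks $W_1,\ldots,W_m$ of length less than $kg$ give nontrivial elements $\beta(W_1),\ldots,\beta(W_m)\in F$. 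Finally I invoke the residual finiteness of free groups: for each $\beta(W_i)$ there is a finite-index normal subgroup of $F$ avoiding it, and intersecting these finitely many subgroups yields a finite-index normal $N\trianglelefteq F$ with $\beta(W_i)\notin N$ for all $i$. Setting $G=F/N$ (finite) and $\alpha=\pi\circ\beta$ for the quotient map $\pi\colon F\to G$ gives a voltage assignment into a finite group with $\alpha(W_i)\neq 1_G$ for every short non-backtracking closed walk, so by the reduction above $\Gamma^{\alpha}$ has no cycle of length less than $kg$, i.e.\ girth at least $kg$.

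The main obstacle is the middle step, namely proving that non-backtracking closed walks have nontrivial net voltage in $\pi_1$; this is precisely the statement that the universal cover of a graph is a tree, and every other ingredient (the projection principle and residual finiteness of free groups) is standard. A secondary point that requires care is the bookkeeping of the walks: one must forbid \emph{all} non-backtracking closed walks of length $<kg$, not merely the cycles of $\Gamma$, since a short cycle of the lift can project onto a closed walk of $\Gamma$ that repeats vertices or edges while still avoiding immediate backtracks.
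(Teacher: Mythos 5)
Your proposal is correct, but there is nothing in the paper to compare it against: the statement is imported verbatim as Theorem~3.3 of Exoo and Jajcay~\cite{ExooR2}, and the present paper never proves it, only cites it. What you give is the classical ``finite covers of large girth'' argument: (i) reduce cycles of length $\ell$ in $\Gamma^{\alpha}$ to non-backtracking closed walks of length $\ell$ in $\Gamma$ with identity net voltage --- this is exactly the correspondence the paper itself invokes in its Appendix (``closed non-reversible walk whose net voltage is the identity'') when testing lifts of $K_{1,3}^{\text{loop}}$; (ii) observe that the canonical spanning-tree assignment $\beta$ into the free group on the co-tree darts gives every non-backtracking closed walk a nontrivial net voltage, because the associated derived graph is the universal cover, hence a tree; (iii) separate the finitely many offending elements $\beta(W_1),\ldots,\beta(W_m)$ from the identity in a finite quotient via residual finiteness of free groups. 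All three steps are sound, and you correctly flag the two points that usually cause trouble: a cycle of the lift projects to a walk that is non-backtracking but need not be a cycle of $\Gamma$, and the finiteness of the set of walks of length $<kg$ is what allows intersecting finitely many finite-index normal subgroups. Two remarks on scope and trade-offs. First, your argument proves more than the quoted statement: it yields finite regular lifts of girth at least $h$ for \emph{every} target $h$, with $kg$ playing no special role; it also applies to base graphs with loops and multiple edges (essential here, since $K_{1,3}^{\text{loop}}$ has girth $1$), which your dart formalism handles correctly. Second, the price of residual finiteness is that it gives no control whatsoever on the order of the quotient group $G$, hence none on the order $|V(\Gamma)|\cdot|G|$ of the lift; in the context of this paper, where the entire point of $n_{K_{1,3}^{\text{loop}}}(3,g,\underline{g+1})$ is to find \emph{small} lifts, such quantitative control is what one would want from a constructive proof, but for the purely qualitative corollary the paper actually draws (lifts of $K_{1,3}^{\text{loop}}$ of arbitrarily large girth exist), your proof is entirely sufficient.
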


This yields the following direct corollary.

\begin{corollary}
There exist regular voltage graph lifts of $K_{1,3}^{\text{loop}}$ of arbitrarily large girth.
\end{corollary}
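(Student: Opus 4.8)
The plan is to invoke the immediately preceding theorem (Th.~3.3 in~\cite{ExooR2}) with the base graph taken to be $\Gamma = K_{1,3}^{\text{loop}}$ itself. The essential preliminary step is to identify the girth of $K_{1,3}^{\text{loop}}$. Since this graph is obtained by attaching a loop to each of the three leaves of the tree $K_{1,3}$, and the underlying tree $K_{1,3}$ contains no cycles, the only cycles present in $K_{1,3}^{\text{loop}}$ are its three loops. Recalling that the voltage graph constructions introduced in this section explicitly permit loops and multiple edges, and that a loop is a cycle of length one, the girth of $K_{1,3}^{\text{loop}}$ is the finite value $g = 1$.

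With this fixed value of $g$ in hand, the theorem applies directly: for every integer $k > 1$ it produces a regular voltage graph lift of $K_{1,3}^{\text{loop}}$ of girth at least $kg = k$. As $k$ may be chosen arbitrarily large, the resulting family of lifts has girth tending to infinity, which is precisely the assertion of the corollary. There is no genuine obstacle here, since the statement is a one-line consequence of the theorem; the only point requiring any care is confirming that $K_{1,3}^{\text{loop}}$ possesses a well-defined finite girth so that the hypothesis of the theorem is satisfied, and this is guaranteed by the presence of the loops. Thus the entire argument reduces to computing the girth of the base graph and then letting $k \to \infty$ in the conclusion of the theorem.
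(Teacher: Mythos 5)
Your proposal is correct and matches the paper's (implicit) argument exactly: the paper derives the corollary directly from Theorem 3.3 of~\cite{ExooR2} applied to the base graph $K_{1,3}^{\text{loop}}$, whose girth is finite because of its loops, so the lifts guaranteed by that theorem have girth at least $kg \to \infty$. Your added care in pinning down the girth of the base graph (a loop giving $g=1$) is a sensible elaboration and changes nothing essential; indeed the argument is insensitive to whether one counts a loop as a cycle of length $1$ or $2$.
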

This motivates the definition of $n_{K_{1,3}^{\text{loop}}}(3,g,\underline{g+1})$: the order of the smallest $(3,g,\underline{g+1})$-graph which is a regular voltage graph lift of $K_{1,3}^{\text{loop}}$ (we set this value to $\infty$ if no such graph exists; we have not been able to show the universal existence of regular voltage graph lifts of $K_{1,3}^{\text{loop}}$ which are $(3,g,\underline{g+1})$-graphs for all $g \geq 3 $). 
Note that having the list of all finite groups of orders not exceeding a specific value $N$ potentially allows 
one to determine all regular voltage graph lifts of $K_{1,3}^{\text{loop}}$ of orders not exceeding $4N$ (the order
of a lift of $K_{1,3}^{\text{loop}}$ via a voltage assignment by elements of a group of order $N$). Even though this appears to be a 
formidable task, the key parameter of such a search is the number of groups in such a list, as the number
of distinct regular lifts of $K_{1,3}^{\text{loop}}$ using voltages from a fixed group $G$ of order not exceeding $N$ is bounded from above by the number of distinct $3$-element subsets of $G$, which is at most cubic in $N$.

This observation allowed us to quickly determine the exact values $n_{K_{1,3}^{\text{loop}}}(3,7,\underline{8})= 36$ and 
$n_{K_{1,3}^{\text{loop}}}(3,9,\underline{10}) = 76$ (with the first result requiring checking all groups of orders 
not exceeding $36/4$, and the second all groups of orders $ \leq 76/4 $).
Clearly $n(3,g,\underline{g+1}) \leq n_{K_{1,3}^{\text{loop}}}(3,g,\underline{g+1})$, and therefore, $36=n(3,7,\underline{8})=n_{K_{1,3}^{\text{loop}}}(3,7,\underline{8})$ and $76=n(3,9,\underline{10})=n_{K_{1,3}^{\text{loop}}}(3,9,\underline{10})$. 
The above two results appear to suggest that $n_{K_{1,3}^{\text{loop}}}(3,g,\underline{g+1})$ might also be of interest for higher girths than those for which we already determined the value $n(3,g,\underline{g+1}).$

In view of this, we computationally determined that $n_{K_{1,3}^{\text{loop}}}(3,11,\underline{12})=288$ (and therefore $n(3,11,\underline{12}) \leq 288$). We refer the interested reader to the Appendix for more details on these computations. This case is of particular interest, as the canonical double cover of the corresponding graph is a $(3,14)$-graph with 576 vertices. This is unfortunately larger than the record $(3,14)$-graph on 384 vertices~\cite{E02}; where 
$g=14$ is the second smallest girth for which the exact value of $n(3,g)$ is not known (with $g=13$ being the smallest)
\cite{ExooR1}.

Another way of obtaining upper bounds for $n(k,g,\underline{g+1})$ is by searching for $(k,g,\underline{g+1})$-graphs through large lists of `promising' graphs that have been studied in a different context. As we saw in Table~\ref{tab:overviewExactComputations}, many of the known $(k,g,\underline{g+1})$-cages have few vertex orbits (and are thus very symmetrical). We searched for $(k,g,\underline{g+1})$-graphs in the list of cubic vertex-transitive graphs up to the order 1280~\cite{PSV13}, cubic arc-transitive graphs up to the order 2048~\cite{CD02}, cubic Cayley graphs up to the order 4094~\cite{PSV13}, cubic arc-transitive and semi-symmetric graphs up to the order 10 000~\cite{CD02}, tetravalent edge-transitive graphs up to the order 512~\cite{WP20}, tetravalent arc-transitive graphs up to the order 640~\cite{PSV13,PSV15}, tetravalent 2-arc-transitive graphs up to the order 2000~\cite{P09} and pentavalent arc-transitive graphs up to the order 500~\cite{P24}, and obtained the following upper bounds\footnote{The corresponding graphs are made publicly available at:\\\url{https://github.com/JorikJooken/KGNoGPlus1Graphs}.}: 

\begin{enumerate}
\item $n(3,11,\underline{12}) \leq 272$ (graph from~\cite{PSV13})
\item $n(3,13,\underline{14}) \leq 800$ (graph from~\cite{PSV13})
\item $n(3,15,\underline{16}) \leq 2162$ (graph from~\cite{CD02})
\item $n(3,17,\underline{18}) \leq 5456$ (graph from~\cite{CD02})
\item $n(4,7,\underline{8}) \leq 273$ (graph from~\cite{PSV13,PSV15})
\item $n(4,9,\underline{10}) \leq 1518$ (graph from~\cite{P09})
\item $n(5,5,\underline{6}) \leq 192$ (graph from~\cite{P24})
\end{enumerate}

This illustrates that for larger girths the equality $n(3,g,\underline{g+1})=n_{K_{1,3}^{\text{loop}}}(3,g,\underline{g+1})$ no longer holds. Unfortunately, the canonical double covers of none of the above graphs results in a record $(k,g)$-graph.

\subsection{Proving a Conjecture by Campbell~\cite{C97}}

The~\textit{odd girth} of a graph $G$ is the length of a shortest cycle of odd length in $G$ (or $\infty$ if there is no odd cycle). In 1997, Campbell~\cite{C97} proved that the smallest $(3,6,\underline{7})$-graph with odd girth 11 has 28 vertices and conjectured that there is a unique such graph. Based on the algorithms described in the current paper, we were able to prove the uniqueness of this graph (shown in Fig.~\ref{fig:36No7OddGirth11}).

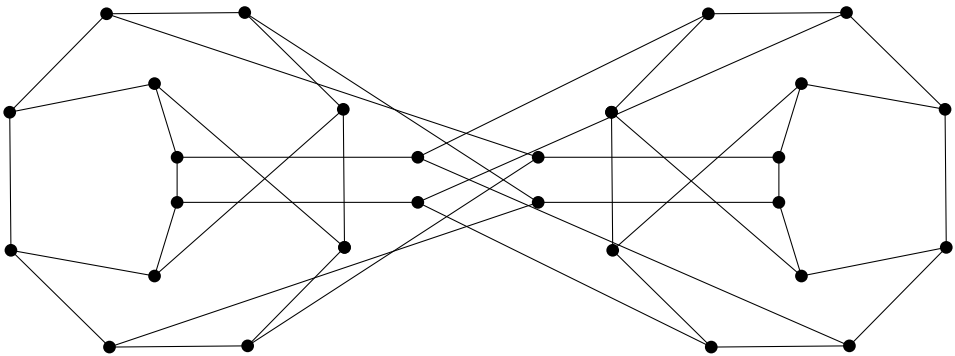
\begin{figure}[h!]
\begin{center}
\begin{tikzpicture}[scale=0.8]
  \def\sides{8}
  \def\radius{3}

\begin{scope}[rotate=90]
\begin{scope}[shift={(0,5)},rotate=180]
  \foreach \i in {1,...,\sides} {
    \draw ({360/\sides * (\i + 1)+23}:\radius) --
    ({360/\sides * (\i)+23}:\radius);
  }
  \fill ({360/\sides * 1+23}:\radius) circle (3pt);
  \fill ({360/\sides * 2+23}:\radius) circle (3pt);
  \fill ({360/\sides * 3+23}:\radius) circle (3pt);
  \fill ({360/\sides * 4+23}:\radius) circle (3pt);
  \fill ({360/\sides * 5+23}:\radius) circle (3pt);
  \fill ({360/\sides * 6+23}:\radius) circle (3pt);
  \fill ({360/\sides * 7+23}:\radius) circle (3pt);
  \fill ({360/\sides * 8+23}:\radius) circle (3pt);
  \fill ({360/\sides * 9+23}:\radius) circle (3pt);
  
  \fill (0.375,0) circle (3pt);
  \fill (-0.375,0) circle (3pt);
  \fill (1.6,-0.375) circle (3pt);
  \fill (-1.6,-0.375) circle (3pt);

  \draw (1.6,-0.375) -- ({360/\sides * 2+23}:\radius);
  \draw (-1.6,-0.375) -- ({360/\sides * 1+23}:\radius);
  \draw (0.375,0) -- (-0.375,0);
  \draw (0.375,0) -- (1.6,-0.375);
  \draw (-0.375,0) -- (-1.6,-0.375);
  \draw (1.6,-0.375) -- ({360/\sides * 6+23}:\radius);
  \draw (-1.6,-0.375) -- ({360/\sides * 5+23}:\radius);

  \draw (0.375,0) -- (0.375,1+3.0);
  \draw (-0.375,0) -- (-0.375,1+3.0);
  \draw ({360/\sides * 8+23}:\radius) -- (-0.375,1+5.0);
  \draw ({360/\sides * 3+23}:\radius) -- (0.375,1+5.0);
  \draw ({360/\sides * 4+23}:\radius) -- (-0.375,1+5.0);
  \draw ({360/\sides * 7+23}:\radius) -- (0.375,1+5.0);
\end{scope}

\begin{scope}[shift={(0,-5)},rotate=0]
  \foreach \i in {1,...,\sides} {
    \draw ({360/\sides * (\i + 1)+23}:\radius) --
    ({360/\sides * (\i)+23}:\radius);
  }
  \fill ({360/\sides * 1+23}:\radius) circle (3pt);
  \fill ({360/\sides * 2+23}:\radius) circle (3pt);
  \fill ({360/\sides * 3+23}:\radius) circle (3pt);
  \fill ({360/\sides * 4+23}:\radius) circle (3pt);
  \fill ({360/\sides * 5+23}:\radius) circle (3pt);
  \fill ({360/\sides * 6+23}:\radius) circle (3pt);
  \fill ({360/\sides * 7+23}:\radius) circle (3pt);
  \fill ({360/\sides * 8+23}:\radius) circle (3pt);
  \fill ({360/\sides * 9+23}:\radius) circle (3pt);
  
  \fill (0.375,0) circle (3pt);
  \fill (-0.375,0) circle (3pt);
  \fill (1.6,-0.375) circle (3pt);
  \fill (-1.6,-0.375) circle (3pt);

  \draw (1.6,-0.375) -- ({360/\sides * 2+23}:\radius);
  \draw (-1.6,-0.375) -- ({360/\sides * 1+23}:\radius);
  \draw (0.375,0) -- (-0.375,0);
  \draw (0.375,0) -- (1.6,-0.375);
  \draw (-0.375,0) -- (-1.6,-0.375);
  \draw (1.6,-0.375) -- ({360/\sides * 6+23}:\radius);
  \draw (-1.6,-0.375) -- ({360/\sides * 5+23}:\radius);

  \draw (0.375,0) -- (0.375,1+3.0);
  \draw (-0.375,0) -- (-0.375,1+3.0);
  \draw ({360/\sides * 8+23}:\radius) -- (0.375,1+5.0);
  \draw ({360/\sides * 3+23}:\radius) -- (-0.375,1+5.0);
  \draw ({360/\sides * 4+23}:\radius) -- (0.375,1+5.0);
  \draw ({360/\sides * 7+23}:\radius) -- (-0.375,1+5.0);
\end{scope}

\fill (0.375,1) circle (3pt);
\fill (-0.375,1) circle (3pt);
\fill (0.375,-1) circle (3pt);
\fill (-0.375,-1) circle (3pt);
\end{scope}

\end{tikzpicture}
\end{center}
\caption{The unique smallest $(3,6,\underline{7})$-graph with odd girth 11.}\label{fig:36No7OddGirth11} 
\end{figure}

\subsection{Independent Verifications}
Since the above results rely on computations, it is very important to take extra measures that could indicate a 
potential bug in the implementation of the algorithm. 
Therefore, we tried to independently verify these results using alternative existing algorithms. More specifically, we used the generators \textit{snarkhunter}~\cite{BGM11} and \textit{GENREG}~\cite{M99} that can generate $3$-regular graphs and $k$-regular graphs (with girth at least $g$), respectively. For all relevant graphs we also verified whether or not they have a $(g+1)$-cycle in two independent ways (by using a backtracking algorithm and by calculating the girth of the canonical double cover). We remark that these generation algorithms are more general purpose than our algorithm (these algorithms ignore the constraint about non-existence of $(g+1)$-cycles) and therefore also a lot slower, so of course we could only independently verify the results that we obtained regarding relatively small graphs. We were able to independently obtain the following results:
\begin{enumerate}
    \item $n(3,3,\underline{4})=10$
    \item $n(3,5,\underline{6})=18$
    \item $n(3,7,\underline{8})=36$
    \item $n(4,3,\underline{4})=15$
\end{enumerate}

Moreover, we also verified for several orders that are larger than the order of the $(k,g,\underline{g+1})$-cage that all different methods obtain the exact same graphs. As expected, all results are in agreement with each other, which gives us confidence that there are no bugs in the implementation of our algorithm.


\section*{Acknowledgements}
\noindent The computational resources and services used in this work were provided by the VSC (Flemish Supercomputer Centre), funded by the Research Foundation Flanders (FWO) and the Flemish Government - Department EWI. Jorik Jooken is supported by a Postdoctoral Fellowship of the Research Foundation Flanders (FWO) with grant number 1222524N. 
Leonard Eze and Robert Jajcay are supported by APVV grants 23-0076 and SK-AT-23-0019 and by VEGA 1/0437/23.

	\newpage	


 \newpage
 \section*{Appendix}
 \subsection*{Computational Details for Determining $n_{K_{1,3}}^{\text{loop}}(3,g,\underline{g+1})$}
The most straightforward approach for computing $n_{K_{1,3}^{\text{loop}}}(3,g,\underline{g+1})$ would be to generate all regular voltage graph lifts of $K_{1,3}^{\text{loop}}$ using increasingly large groups until a $(3,g,\underline{g+1})$-graph is encountered (note that a list of all small pairwise non-isomorphic groups is indeed available in the \textit{Small Groups Library} of \textit{GAP}~\cite{BEO02}). However, with some additional care $n_{K_{1,3}^{\text{loop}}}(3,g,\underline{g+1})$ can be computed much more efficiently without explicitly having to calculate all voltage assignments and all corresponding regular graph lifts. Note that in order to determine the voltage assignments $\alpha$ for a given group $G$ one can orient each edge (and loop) of $K_{1,3}^{\text{loop}}$ arbitrarily and then assign a group element to each such dart $e$ (this immediately fixes $\alpha(e^{-1})=\alpha(e)^{-1}$). This results in $|G|^6$ different voltage assignments for the graph $K_{1,3}^{\text{loop}}$. However, for an arbitrary connected graph $\Gamma$ and an arbitrary spanning tree $T$ of $\Gamma$, every regular voltage graph lift of $\Gamma$ is isomorphic to a regular voltage graph lift of $\Gamma$ where the voltage assignment maps every dart in $T$ to the identity element of $G$~\cite{GT01}. Therefore, we may fix the voltage assignments for the darts of the (unique) spanning tree of $K_{1,3}^{\text{loop}}$, leaving us with only (at most) $|G|^3$ different voltage assignments to consider. It is often not necessary to consider all $|G|^3$ such assignments. More specifically, for the remaining 3 darts in $K_{1,3}^{\text{loop}}$, one can use a recursive backtracking algorithm that assigns a group element to each dart and backtrack as soon as this leads to a cycle of length smaller than $g$ or length $g+1$. We also note that it is not necessary to explicitly search for cycles in the regular voltage graph lift $\Gamma^{\alpha}$, since each cycle of length $\ell$ in $\Gamma^{\alpha}$ corresponds to a closed non-reversible walk of length $\ell$ in $\Gamma$ whose net voltage is equal to the identity element of $G$ (the net voltage of a walk $W=e_1, e_2, \ldots, e_{\ell}$ is equal to $\alpha(e_1) \cdot \alpha(e_2) \cdot \ldots \cdot \alpha(e_{\ell})$). For the graph $K_{1,3}^{\text{loop}}$, it is straightforward to enumerate all closed non-reversible walks of length $\ell$. Finally, we note that if $K_{1,3}^{\text{loop}}$ has an automorphism which maps edge $e_1$ to edge $e_2$ and vice-versa, we may impose without loss of generality that $\alpha(e_1') \leq \alpha(e_2')$ (for an arbitrary but fixed ordering of the group elements, where $e_1'$ and $e_2'$ represent the darts corresponding to $e_1$ and $e_2$ in the orientation of $K_{1,3}^{\text{loop}}$, respectively). All the above considerations make it possible to compute $n_{K_{1,3}^{\text{loop}}}(3,g,\underline{g+1})$ much more efficiently than $n(3,g,\underline{g+1})$ can be computed.

\begin{thebibliography}{99}
		
	   \bibitem{Babuena}
		C.~Balbuena, D.~González-Moreno, J.~J.~Montellano-Ballesteros.
        \newblock{A note on the upper bound and girth pair of $(k, g)$-cages.}
        \newblock \textit{Discrete Appl. Math.}, 161(6):853--857, 2013.

            \bibitem{Babuena2}
		C.~Balbuena, and J.~Salas.
           \newblock{On the order of graphs with a given girth pair.} \newblock \textit{Discrete Math.}, 321:68-75, 2014.

        \bibitem{BEO02}
        H.~U.~Besche, B.~Eick and E.~A.~O'Brien.
        \newblock{A millennium project: constructing Small Groups}
        \newblock \textit{Internat. J. Algebra Comput.} 12, 623--644, 2002.

            \bibitem{Pisanski}
		M.~Boben, R.~Jajcay and T.~ Pisanski.
        \newblock{Generalized cages.} 
       \newblock \textit{Electron. J.  Combin.}, P1(77), 2015.

        \bibitem{BGM11}
        G.~Brinkmann, J.~Goedgebeur and B.~D.~McKay. 
        \newblock{Generation of cubic graphs.} 
        \newblock \textit{Discrete Math. Theor. Comput. Sci.}, 13(2), 2011.

        \bibitem{C97}
        C.~M.~Campbell. 
        \newblock{On cages for girth pair $(6, b)$.} 
        \newblock \textit{Discrete Math.}, 177:259--266, 1997.
        
      \bibitem{CD02}
       M.~Conder and P.~Dobcs\'anyi.
       \newblock{Trivalent symmetric graphs on up to 768 vertices.}
       \newblock {\textit{J. Comb. Math. \& Comb. Comp.}}, 40:41--63, 2002.

       \bibitem{CDG23}
       K.~Coolsaet, S.~D’hondt, and J.~Goedgebeur.
       \newblock{House of Graphs 2.0: A database of interesting graphs and more.}
       \newblock {\textit{Discrete Appl. Math.}}, 325:97--107, 2023. Available at \url{https://houseofgraphs.org/}.

            \bibitem{Erds}
	P.~Erd\H{o}s and H.~Sachs:
	\newblock{Regul\" are Graphen gegebener Taillenweite mit minimaler Knotenzahl.}
	\newblock \textit{Wiss. Z. Uni. Halle (Math. Nat.)}
          12:251--257, 1963.

        \bibitem{E02}
        G.~Exoo.
        \newblock{A small trivalent graph of girth 14.} 
        \newblock \textit{Electron. J.  Combin.} 9(N3), 2002.
        
             \bibitem{ExooR1} 
		G.~Exoo and R.~Jajcay. 
		 \newblock{Dynamic cage survey.} 
          \newblock \textit{Electron. J.  Combin.}, DS(16), 2013.
		
		\bibitem{ExooR2} 
		G.~Exoo and R.~Jajcay. 
            \newblock{On the girth of voltage graph lifts.}
            \newblock \textit{Europ. J. Combin.}, 32(4):554--562, 2011.

        \bibitem{FHR97}
        H.~Fu, K.~Huang, and C.~Rodger.
        \newblock{Connectivity of cages}, 
        \newblock \textit{J. Graph Theory}, 24:187--191, 1997.

        \bibitem{GJ24}
        J.~Goedgebeur and J.~Jooken.
        \newblock{Exhaustive generation of edge-girth-regular graphs.}
		\newblock \textit{Experim. Math.}, 1--13, 2025.

        \bibitem{GJV24}
        J.~Goedgebeur, J.~Jooken and T. Van den Eede.
        \newblock{Computational methods for finding bi-regular cages.}
		\newblock arXiv:2411.17351 [math.CO], 2024.
        
        \bibitem{GT01}
        J.~L.~Gross, T.~W.~Tucker.
        \newblock{Topological Graph Theory.}
        \newblock Dover, Mineola, New York, 2001.
  
		 \bibitem{Harary}
		F.~Harary and P.~Kovács. 
          \newblock{Regular graphs with given girth pair},  
          \newblock \textit{J. Graph Theory}, 7(2):209--218, 1983.

        \bibitem{JJP24}
        R.~Jajcay, J.~Jooken and I.~Porupsánszki.
        \newblock{On vertex-girth-regular graphs: (Non-)existence, bounds and enumeration.}
		\newblock arXiv:2408.14557 [math.CO], 2024.
  

            \bibitem{MMN98}
            B.~D.~McKay, W.~Myrvold, and J. Nadon. 
            \newblock{Fast backtracking principles applied to find new cages.} 
            \newblock In Proceedings of the Ninth Annual ACM-SIAM Symposium on Discrete Algorithms (San Francisco, CA, 1998), 188--191. ACM, New York, 1998.

            \bibitem{MP14}
            B.~D.~McKay and A.~Piperno. 
            \newblock{Practical graph isomorphism, II.}
            \newblock \textit{J. Symbolic
            Comput.}, 60:94--112, 2014.

            \bibitem{M99}
            M.~Meringer. 
            \newblock{Fast generation of regular graphs and construction of cages.}
            \newblock \textit{J. Graph Theory}, 30(2):137–-146, 1999.

    \bibitem{OEIS}
      OEIS Foundation Inc. Sequence A006821 in the on-line encyclopedia of integer sequences.
       \url{https://oeis.org/A006821}, 2024

        \bibitem{P24}
       P.~Poto\v cnik.
       \newblock Current list for “Census of pentavalent arc-transitive graphs”. Available at 
       \url{https://users.fmf.uni-lj.si/potocnik/work_datoteke/AT5-Census.mgm}, 2024.

        \bibitem{P09}
       P.~Poto\v cnik.
       \newblock A list of 4-valent 2-arc-transitive graphs and finite faithful amalgams of index $(4, 2)$.
       \newblock {\textit{Europ.
J. Combin.}}, 30(5):1323–1336, 2009.
       

        \bibitem{PSV13}
       P.~Poto\v cnik, P. Spiga and G. Verret.
       \newblock Cubic vertex-transitive graphs on up
to 1280 vertices.
       \newblock {\textit{J. Symbolic Comput.}}, 50:465--477, 2013.

        \bibitem{PSV15}
       P.~Poto\v cnik, P. Spiga and G. Verret.
       \newblock Bounding the order of the vertex-stabiliser in 3-valent vertex transitive and 4-valent arc-transitive graphs.
       \newblock {\textit{J. Combin. Theory Ser. B}}, 111:148--180, 2015.
       
    \bibitem{S67}
      N.~Sauer.
      \newblock{Extremaleigenschaften regul\"arer Graphen gegebener}
      \newblock \textit{Taillenweite, I and II, Sitzungsberichte \"Osterreich. Akad. Wiss. Math. Natur. Kl., S-B II}, 176:9--25 \& 27--43, 1967.
  
         
       \bibitem{WP20}
       S.~Wilson and P.~Poto\v cnik.
       \newblock Recipes for edge-transitive tetravalent graphs.
       \newblock {\textit{The Art of Discrete and Appl. Math.}}, 3(1), 2020.
       

	\end{thebibliography}
\end{document}